\documentclass[11pt,a4paper,draft]{article}
\parskip=3pt\parindent=0pt

\usepackage{anysize}
\usepackage{paralist}
\usepackage{amssymb}
\usepackage{amsthm}
\usepackage{amscd}
\usepackage{amsmath}
\newcommand{\Z}{{\mathbb Z}}
\newcommand{\N}{{\mathbb N}}
 \newcommand\isom{\cong}
 \newcommand{\tens}{\otimes}
 \newcommand{\deq}{:=}
\newcommand{\cmps}{\circ}
\newcommand\set[1]{\left\{#1\right\}}
 \newcommand\simp[1]{\langle#1\rangle}
\newcommand\xto{\xrightarrow}
 \DeclareMathOperator\Bild{im}
 \DeclareMathOperator\Kern{ker}
 \newcommand\im{\Bild}
 \renewcommand\ker{\Kern}
 \DeclareMathOperator\sign{sign}
 \newcommand\join{\ast}
\usepackage{hyperref}
\usepackage{graphicx}
\newcommand{\wasauchimmer}{\rotatebox[origin=c]{270}{\ensuremath{\in}}}
\newcommand{\dd}{\partial}

\newcommand\alt{{\mathrm{alt}}}
\newcommand\sm{{\setminus}}

\newtheorem{theorem}{Theorem}[section]
\newtheorem{corollary}[theorem]{Corollary}
\newtheorem{lemma}[theorem]{Lemma}
\newtheorem{proposition}[theorem]{Proposition}
\theoremstyle{definition}
\newtheorem{definition}[theorem]{Definition}
\newtheorem{example}[theorem]{Example}

\newtheorem{remark}[theorem]{Remark}
\newtheorem{remark*}{Remark}


\begin{document}
\title{Combinatorial Stokes formulas\\
       via minimal resolutions}
\author{Bernhard Hanke\\Inst.\ Math.\\LMU M\"unchen\\
         \texttt{hanke@mathematik.uni-muenchen.de}
 \and    Raman Sanyal\\Inst.\ Math., MA 6-2\\TU Berlin\\
         \texttt{sanyal@math.tu-berlin.de}
 \and    Carsten Schultz\\Inst.\ Math., MA 6-2\\TU Berlin\\
         \texttt{cschultz@math.tu-berlin.de}
 \and    G\"unter M. Ziegler\\Inst.\ Math., MA 6-2\\TU Berlin\\
         \texttt{ziegler@math.tu-berlin.de}
}
\date{September 29, 2007}
\maketitle

\begin{abstract}
  We describe an explicit chain map from the standard resolution to the
  minimal resolution for the finite cyclic group $\Z_k$ of order~$k$.  We
  then demonstrate how such a chain map induces a ``$\Z_k$-combinatorial
  Stokes theorem'', which in turn implies ``Dold's theorem'' that there is
  no equivariant map from an $n$-connected to an $n$-dimensional free
  $\Z_k$-complex.

  Thus we build a combinatorial access road to problems in combinatorics and
  discrete geometry that have previously been treated with methods from
  equivariant topology.
  The special case $k=2$ for this is classical; it involves
  Tucker's (1949) combinatorial lemma which implies the Borsuk--Ulam
  theorem, its proof via chain complexes by Lefschetz (1949),
  the combinatorial Stokes formula of Fan (1967), and
  Meunier's work (2006).
\end{abstract}

\section{Introduction}

The Borsuk--Ulam theorem \cite{Borsuk} about $\Z_2$-equivariant maps
between spheres,
and its extension to $\Z_k$-actions formulated by Dold \cite{Dold},
have many interesting applications in combinatorics and geometry ---
see Matou\v{s}ek \cite{Matousek:BU}.
Since these are topological theorems with purely combinatorial
consequences, there is great interest in combinatorial approaches
to the area.

\subsection{The classical case, \boldmath$k=2$}

For the case $k=2$ such a path-way is well-established:
In 1945, Tucker \cite{Tucker} presented a combinatorial lemma that
implies the Borsuk--Ulam theorem: A centrally symmetric triangulation of~$S^n$
that refines the hyperoctahedral triangulation of the $n$-sphere cannot get an
antipodal labelling from the set $\{\pm1,\dots,\pm n\}$ such that no edge gets
vertex labels $+i,-i$.  In 1952, Fan~\cite{fan52} extended this lemma: If
the labels are taken from the set $\{\pm1,\dots,\pm m\}$, then the number of
facets of the triangulation of~$S^n$ that get an ``alternating labelling'' by
$+j_0,-j_1,\ldots,(-1)^nj_n$ with $1\le j_0<j_1<\dots<j_n\le m$ is odd and
hence non-zero. In particular, $m$ must be larger than $n$ for such a
labelling to exist.

In 1952, Fan \cite{fan67} presented a rainbow coloring theorem for general
pseudomanifolds (interpreted as a ``combinatorial Stokes theorem'' by Meunier
\cite{meunier06:_spern}), which says that for any orientable $n$-dimensional
pseudomanifold with boundary, equipped with a coloring by $\{\pm1,\dots,\pm
m\}$ without antipodal edges, the number of rainbow-colored $n$-simplices with
positive smallest label equals the number of rainbow-colored $(n-1)$-simplices
in the boundary (counted with appropriate signs, depending on dimension and
orientations).  The resulting formula is easy to prove since by linearity it
can be reduced to the case of a pseudomanifold that consists of a single
$n$-simplex. However, a treatment in terms of chain complexes yields a
simple, systematic
proof that also motivates the formula in question; this was first done in
Lefschetz' 1949 treatment \cite[Sect.~IV\S7, pp.~134--140]{Lefschetz-intro} of
Tucker's lemma, and then for Fan's lemma by Meunier \cite{meunier06:_spern}.
This also leads to simple, transparent, \emph{combinatorial} proofs for the
Kneser conjecture (see Matou\v{s}ek \cite{Mat1}, Ziegler \cite{Z77}) and for
its strengthening by Schrijver \cite{Schrijver-kneser} (see
Meunier~\cite{meunier06:_spern}).%

As amply demonstrated in Matou\v{s}ek \cite{Matousek:BU}, a variety of
combinatorial hypergraph coloring problems as well as various geometric
multiple-incidence problems were first proved by a result known as Dold's
theorem \cite{Dold}, which says that there is no equivariant map from an
$n$-connected free $\Z_k$-complex to an $n$-dimensional such complex. (For
$k=2$ this is equivalent to the Borsuk--Ulam theorem). In view of the purely
combinatorial hypergraph coloring results proved with this tool (see Alon,
Frankl \& Lov\'asz \cite{AlonFranklLovasz}, Matou\v{s}ek \cite{Mat2}, Ziegler
\cite{Z77}, etc.), one is led to ask for an analogous combinatorial treatment
of Dold's theorem, for a ``$\Z_k$-Tucker lemma'', etc.
Steps in this direction were taken by Ziegler \cite{Z77} and in particular by
Meunier \cite{meunier:_z_fan05}, who obtained a semi-explicit
combinatorial Stokes formula for the case when $k$ is odd.

\subsection{The \boldmath$\Z_k$-combinatorial Stokes theorem}%
\label{ssec:plan}

The main objective of this paper is not only to derive a
``$\Z_k$-combinatorial Stokes formula'', Theorem~\ref{thm:stokes},
that is valid for all $k\ge2$, but also to
explain where such a result comes from, and why it has the form it has.
This question arises even in the classical case of $k=2$:
\emph{Why} should we look for, and count, simplices with
alternating labels, with signs that depend on
parity of dimension and on orientation?

A hint for this is given by Meunier's treatment
in~\cite{meunier06:_spern} of Fan's
combinatorial Stokes theorem, via chain complexes:
The chain complex that plays a prominent role in his proof is the
minimal free resolution (in the group homology sense) of the group
$\Z_2$, and Meunier's proof in essence builds on a
$\Z_2$-equivariant chain map from the chain complex of the universal label space to the minimal resolution.

Our combinatorial Stokes formula
concerns simplical complexes $X$ whose vertices get labels in the set
$\Z_k\times\N$, where we interpret the elements of $\N$ as ``colors'',
while the elements of $\Z_k$  play the role of
``signs''. The main requirement is that adjacent vertices of~$X$
may not have the same color and different signs.
Such an \emph{admissible} labelling $\ell:V(X)\rightarrow\Z_k\times\N$
amounts to a simplicial map from $X$ to a
``universal label space''  $(\Z_k)^{*\N}$ and this establishes a chain map
$\ell_\#:C_\bullet(X)\rightarrow C_\bullet((\Z_k)^{*\N})$ of simplicial
chain complexes (with coefficient in some commutative ring $R$).

The label space is equipped with a canonical free simplicial $\Z_k$-action, corresponding
to the natural symmetry of admissible labellings given  by cyclically permuting the signs in $\Z_k$.
Thus there is a $\Z_k$-equivariant chain map
$C_\bullet((\Z_k)^{*\N})\rightarrow M_\bullet$ to the
\emph{minimal resolution} of the ring $R$ over the
group ring $R[\Z_k]$ which commutes
with the canonical augmentations on both complexes,
unique up to $\Lambda$-linear chain homotopy.
This statement relies on the fact that $M_\bullet$ is
a \emph{free resolution} of $R$ over $R[\Z_k]$.
The chain complex $M_\bullet$ consists of free modules of rank one over $R[\Z_k]$ in every degree,  
hence only label patterns of a very specific form survive to the minimal resolution.

The combinatorial Stokes formula results from an explicit
description of the chain map to the minimal resolution (and
in particular of the surviving label patterns) combined with the simple fact that this
chain map commutes with boundary operators.

The following diagram of chain complexes and chain maps illustrates
the homological-algebraic content of this mechanism.

\[
\begin{array}{ccccc@{\qquad\quad}l}
                 &  x       &   \stackrel{\partial_i}\longmapsto &
                 \partial x  \\
 & \wasauchimmer && \wasauchimmer
\\
\rightarrow  &     C_i(X)   & \stackrel{\partial_i}\longrightarrow                 &
                   C_{i-1}(X)   & \rightarrow & \textrm{\small simplicial chain complex} \\[2mm]
                 & \ell_{\#}  \downarrow&& \ell_{\#}  \downarrow &
& \quad  \textrm{\scriptsize (labelling)}\\[2mm]
\rightarrow  & C_i((\Z_k)^{*\N})& \stackrel{\partial_i}\longrightarrow &
                   C_{i-1}((\Z_k)^{*\N})& \rightarrow &
                   \textrm{\small chain complex of label space}\\[2mm]
                 &  h_i  \downarrow&& h_{i-1}  \downarrow &
& \quad   \textrm{\scriptsize (map to standard resolution)}  \\[2mm]
\rightarrow  & S_i &\stackrel{\partial_i}\longrightarrow & S_{i-1} & \rightarrow &
                   \textrm{\small standard resolution}\\[2mm]
                 &  f_i  \downarrow&& f_{i-1}  \downarrow &
& \quad \textrm{\scriptsize (map to minimal resolution, Prop.~\ref{prop:f-chain-map}) }   \\[2mm]
\rightarrow  & M_i=R[\Z_k] &\stackrel{\partial_i}\longrightarrow & M_{i-1}= R[\Z_k]  & \rightarrow &
                     \textrm{\small minimal resolution}\\[2mm]
                 &&& u\downarrow &
&\quad \textrm{\scriptsize  (evaluate at the neutral element)}\\[2mm]
                 &&& R
\end{array}
\]

The composite chain map $h_\bullet^\ell:= h_\bullet \circ \ell_\#$ (see Section \ref{sec:labelled_complexes})
sends simplices to ``patterns'' of label sequences (counted with multiplicities and
according to orientation).
Thus, $h^\ell_i (x)$ is the formal sum of the
patterns that arises from an $i$-chain $x\in C_i(X)$, while
$h^\ell_{i-1}(\partial_i x)$
is the corresponding sum of patterns on the
$(i-1)$-simplices in the boundary of~$x$.

Given a pattern for $i$-simplices,
the map $f_i$ to $M_i$ followed by the
boundary map of the minimal resolution and then by the evaluation map $u$
(which maps an element of the group ring to the coefficient of the
neutral element) tells us how to count $i$-patterns.
Similarly, we count $(i-1)$-patterns according to $u\circ f_{i-1}$.

In this notation, the combinatorial Stokes formula
simply reads
\[
    u \big( ( \partial \cmps f \cmps h^\ell)  (x) \big)  =   u \big(  ( f \cmps h^\ell)   (\partial x)  \big) \in \ R
\]
for $x\in C_i(x)$. Our Theorem~\ref{thm:stokes}
combines this fact  with the explicit description of the chain map $f_\bullet $
presented in Section \ref{sec:chain_map}.

The formula obtained in this way depends on some choices.
Indeed, the map $\ell_\# $ is determined by the given labelling on $X$
and there is a canonical choice for $h_\bullet$. Furthermore,
replacing $u$ by the evaluation at another group element in $\Z_k$
induces a Stokes formula which is given by shifting the signs
involved in the old one cyclically by the inverse of this element.
However, the map $f_\bullet$ is determined only up to chain
homotopy and different choices lead to
different Stokes formulas, in general.
It is easy to see (cf. Lemma \ref{lem:uniqueness}) that
the chain map from the standard to the minimal resolution is uniquely determined
upon choosing  $R$-linear complements of the kernels of the boundary operator
in each degree of the minimal resolution.
We will propose a particular choice,  uniform for all $k$
(see the remarks following Lemma \ref{lem:uniqueness}),
and analyze the corresponding label patterns surviving to the minimal resolution
in terms of {\em strongly alternating labellings} (see Definition \ref{def:stralt}). This notion and
the resulting Stokes formula
restrict to the notion of alternating labellings and to
the classical Fan formula if $k=2$.

The boundary operator $\partial_i$ in the minimal resolution depends on the parity of $i$.
Consequently, as in the classical case $k=2$,
we actually get two combinatorial Stokes formulas depending on whether the
dimension of the given simplicial chain on $X$ is even or odd.

\subsection{Plan}

In Section~\ref{sec:basics} we review the
combinatorial Stokes formula and the Tucker lemma in the classical
case when $k=2$. In this case $X$ is required to be a
$d$-pseudomanifold, and $x=o_d\in C_d(X)$ is an orientation chain for
it. However, a key example for our discussions for arbitrary $k$ is the
universal label space $(\Z_k)^{*m}$,
and this is a pseudomanifold for $k=2$ (at least for finite $m$), but not for
$k>2$. Thus we admit for greater generality below.

Section~\ref{sec:chain_map} is the technical heart of 
our paper: We explicitly construct the
chain maps that lead to the combinatorial Stokes formula
in Section~\ref{sec:ZpStokes} and we give a combinatorial interpretation of 
the relevant label patterns in terms of strongly alternating elements. 
We remark that this construction is much more difficult for $k >2$ than 
in the classical case $k=2$. 

From this, in Section~\ref{sec:Tuckerlemmas}, we derive
``$\Z_k$-Tucker lemmas''.
What should such a result achieve, if we follow the model for $k=2$?
It should refer to a labeled simplicial complex $X$
\emph{with a free $\Z_k$ action}, and predict the
existence of simplices with a specified type of label pattern.
Topologically, it should imply
that for some $d$-connected free $\Z_k$-space
with arbitrarily fine triangulation
(for $k=2$: antipodal triangulations of the $d+1$-sphere)
there is no equivariant map to a specific $d$-dimensional free
$\Z_k$-space which serves as a ``label space''.
The Tucker lemmas should be derived from the combinatorial Stokes theorem
by induction on the dimension, once we can identify
suitable chains ({\em generalized spheres}, cf. Definition \ref{def:gensphere}) in the complex $X$.
In Section~\ref{sec:Tuckerlemmas}, we derive a
generalized $\Z_k$-Tucker lemma, Theorem~\ref{thm:count-transfer},
which in the case $k=2$ specializes to Fan's and Tucker's lemma,
and which also yields the ``$\Z_k$-Tucker--Fan
lemma'' of Meunier \cite[Thm.~2]{meunier06:_spern}
as an example, without Meunier's
restriction to the case of odd~$k$.
We also derive a (homological) version of Dold's theorem from this set-up.

Finally, in Section~\ref{sec:rainbow} we determine the homotopy type
of the target space $(\Z_k)^{*\N}_{\alt \leq d}$ that appears implicitly in
Meunier's and explicitly in our version of the $\Z_k$-Tucker lemma. In
the special case $k=2$ this yields the natural target space for
rainbow colorings --- which appears in Fan's classical work
\cite{fan82} and its current extensions by Tardos \& Simonyi
\cite{TardosSimonyi3,TardosSimonyi1}.

\begin{small}
\paragraph{Acknowledgements.}

We are grateful to Fr\'ed\'eric Meunier for his unpublished paper
\cite{meunier:_z_fan05}, which was an important stepping-stone for our
work. We also thank Mark de Longueville for important discussions.

This work grew on the hospitality of KTH Stockholm and the
Mittag-Leffler Institute on occasion of the
``Topological Combinatorics'' workshop in May 2005, and the
MSRI special semester ``Computational Applications of Algebraic
Topology'' in the fall of 2007. We gratefully acknowledge support
by a DFG Leibniz grant (GMZ, BH) and from the
DFG Research Training Group ``Methods for Discrete Structures'' in
Berlin (CS, RS). BH thanks TU Berlin for its hospitality.

\end{small}

\section{Fan and Tucker revisited}\label{sec:basics}

A \emph{$d$-pseudomanifold} is a finite, pure $d$-dimensional, simplicial
complex $X$ such that any $(d-1)$-face (ridge) is contained in at
most two $d$-faces (facets) of the complex.  The ridges that lie in
exactly one facet generate the \emph{boundary} $\partial X$, which is thus
a pure $(d-1)$-dimensional simplicial complex (or empty).  The vertex set of a
complex $X$ will be denoted by $V(X)$, the edge set by $E(X)$.
A $d$-pseudomanifold is \emph{orientable} if the facets can be oriented
consistently so that they induce opposite orientations on the interior ridges,
that is, if there is an \emph{orientation $d$-chain} $o_d$ in the chain group
$C_d(X;\Z)$ such that the boundary $\partial o_d$ is supported only on
the boundary complex $\partial X$.

We refer to Munkres \cite{Munkres:AT}
for basics about chain complexes, chain maps, and orientability.

\begin{definition}\label{def:admissible}
    An \emph{admissible vertex labelling} of a pure $d$-dimensional simplicial
    complex $X$ is a map
\[
    \ell:V(X)\longrightarrow\Z\sm\{0\}
\]
    such that no two adjacent vertices obtain opposite labels, that is, such that
    $\ell(v)\neq-\ell(w)$ for $\{v,w\}\in E(X)$.

    Under such a labelling, a \emph{$+$alternating facet} is one that
    obtains labels $+j_0,-j_1,+j_2,\dots,(-1)^dj_d$ with
    $0<j_0<j_2<\dots<j_d$ (that is, all labels have different absolute
    values, and if we order them by absolute value, then the signs alternate, starting
    with a positive sign).  Similarly, a \emph{$-$alternating facet} 
    obtains labels $-j_0,+j_1,-j_2,\dots,(-1)^{d+1} j_d$ with
    $0<j_0<j_2<\dots<j_d$.
\end{definition}

The main result of Fan's 1967 paper \cite{fan67} was that for any admissible
vertex labelling on an oriented
$d$-pseudomanifold, $(-1)^d$ times the number of \emph{$+$alternating facets}
(counted according to orientation and with an additional
minus sign if $d$ is odd) plus the number of
\emph{$-$alternating facets} (counted according to orientation) yields the number of
\emph{$+$alternating} facets in the boundary complex.  Here
``counted according to orientaton'' means that a facet is counted as $-1$ if
the ordering of the vertices according to the label ordering
$j_0,j_1,j_2,\dots,j_d$ yields a negative orientation of the facet (and
similarly for $-$alternating facets).
If the $d$-pseudomanifold is not orientable, then all of this is still true
modulo~$2$.

With or without explicit notation for this (Fan writes
``$\alpha(+j_0,-j_1,+j_2,\dots,(-1)^dj_d)$'' for the number of $d$-simplices
with the given set of labels, counted according to orientation), the precise
count is a bit tricky to digest.  However, it clearly
relates a sum over a pseudomanifold to a sum over the boundary.
This explains why Meunier \cite{meunier06:_spern} calls this a discrete
``Stokes theorem''.

From Fan's lemma, it is easy to derive the Tucker lemma, by induction
on the dimension, using the decomposition of $\Sigma^d$
into upper and lower hemisphere.

\begin{proposition}[Tucker lemma \cite{Tucker}, Lefschetz
{\cite[Sect.~IV\S7]{Lefschetz-intro}}, 
Fan \cite{fan52}]\label{tucker_lemma}
\label{prop:tucker_lemma}
    Let $\Sigma^d$ be a centrally symmetric triangulation of the
    $d$-sphere $S^d$ that refines the hyperoctahedral triangulation. Then
    there is no admissible vertex labelling $\ell: V(\Sigma^d) \rightarrow
    \{\pm 1,\dots,\pm d\}$ that is antipodal, i.e.\ $\ell(-v)=-\ell(v)$
    for all vertices~$v$.

    Indeed, for any antipodal vertex labelling $\ell: V(\Sigma^d)
    \rightarrow \{\pm 1,\dots,\pm m\}$, the number of $+$alter\-nating
    facets (with labels $+j_0,-j_1,+j_2,\dots,(-1)^dj_d$, where $0<j_0<\dots<j_d$)
    is odd and hence nonzero.
\end{proposition}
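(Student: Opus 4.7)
The plan is to prove the stronger "Indeed\,\dots" statement about the parity of the $+$alternating facet count, from which the first assertion follows immediately: if the label values lie in $\{\pm 1,\dots,\pm d\}$, then a $+$alternating $d$-facet would require $d+1$ strictly increasing positive values $j_0<\dots<j_d$ taken from $\{1,\dots,d\}$, which is impossible by pigeonhole; so the count would be zero, contradicting oddness. Hence I focus on showing that the number of $+$alternating facets of $\Sigma^d$ is odd, and I will work everywhere modulo~$2$ so that signs and orientation conventions disappear and I need only Fan's combinatorial Stokes formula in its mod-$2$ form.

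The proof proceeds by induction on $d$. For the base case $d=0$, the complex $\Sigma^0$ consists of two antipodal vertices with labels $\pm j$ for some $j>0$; there is a unique $+$alternating $0$-facet, namely the vertex with positive label, so the count is~$1$.

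For the inductive step I exploit that $\Sigma^d$ refines the hyperoctahedral triangulation of $S^d$. Hence $\Sigma^d$ decomposes as $\Sigma^d_+\cup \Sigma^d_-$, where $\Sigma^d_\pm$ are centrally symmetric triangulated closed hemispheres, each an orientable $d$-pseudomanifold, meeting in the equator $\Sigma^{d-1}=\Sigma^d_+\cap\Sigma^d_-$, which is itself a centrally symmetric triangulation of $S^{d-1}$ refining the hyperoctahedral triangulation of $S^{d-1}$. Applying Fan's combinatorial Stokes formula modulo~$2$ to the admissible labelling $\ell|_{\Sigma^d_+}$ on $\Sigma^d_+$ gives
\[
    N^+_{\mathrm{up}} + N^-_{\mathrm{up}} \;\equiv\; N^+_{\mathrm{eq}} \pmod 2,
\]
where $N^\pm_{\mathrm{up}}$ denotes the number of $\pm$alternating $d$-facets in $\Sigma^d_+$ and $N^+_{\mathrm{eq}}$ the number of $+$alternating $(d-1)$-facets in $\partial\Sigma^d_+=\Sigma^{d-1}$. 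The antipodal map $v\mapsto -v$ restricts to a simplicial isomorphism $\Sigma^d_-\to \Sigma^d_+$ which, by the antipodality $\ell(-v)=-\ell(v)$, carries $+$alternating facets to $-$alternating facets and vice versa; therefore
\[
    N^+_{\mathrm{low}}=N^-_{\mathrm{up}}, \qquad N^-_{\mathrm{low}}=N^+_{\mathrm{up}}.
\]
Summing, the total number of $+$alternating facets in $\Sigma^d$ is $N^+_{\mathrm{tot}}=N^+_{\mathrm{up}}+N^+_{\mathrm{low}}=N^+_{\mathrm{up}}+N^-_{\mathrm{up}}\equiv N^+_{\mathrm{eq}}\pmod 2$. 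The inductive hypothesis, applied to the antipodal triangulation $\Sigma^{d-1}$ and the restricted labelling $\ell|_{\Sigma^{d-1}}$ (still admissible and antipodal, with values in $\{\pm1,\dots,\pm m\}$), gives $N^+_{\mathrm{eq}}\equiv 1\pmod 2$, completing the induction.

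The main obstacle is bookkeeping rather than conceptual: I need to check carefully that the ``count on the boundary'' in Fan's formula applied to $\Sigma^d_+$ indeed picks up precisely the $+$alternating $(d-1)$-facets of the equator (and not some mixed $\pm$contribution), and that my claimed bijection between alternating facets in the two hemispheres respects the sign of the smallest label correctly. Both steps reduce to the explicit form of Fan's definitions and are routine once one passes to $\bmod\,2$, which is why this reduction is the crucial simplification in the plan.
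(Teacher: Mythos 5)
Your proof is correct and follows precisely the route the paper indicates in a single sentence (``From Fan's lemma, it is easy to derive the Tucker lemma, by induction on the dimension, using the decomposition of $\Sigma^d$ into upper and lower hemisphere''): you apply Fan's combinatorial Stokes formula modulo~$2$ to the upper hemisphere, use antipodality to match $+$alternating facets below with $-$alternating facets above, and close the induction on the equator. One small wording slip: the individual hemispheres $\Sigma^d_\pm$ are not themselves centrally symmetric, only interchanged by the antipodal map, but your argument only uses the interchange, so nothing is affected.
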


To match this with the following, and to pave the way for the
transition to a more algebraic treatment, we first re-interpret the
set of labels as
\[
   \Z\setminus\{0\}\ =\ \Z_2\times \N,
\]
where $\N$ are the (non-zero) natural numbers, and
$\Z_2\equiv\{1,-1\}$ (which will later be identified with the multiplicative group of order $2$).

Thus any admissible labelling induces a simplicial map
\[
    \ell: X \ \ \longrightarrow\ \ (\Z_2)^{*\N}.
\]
Here $\Z_2=\{1,-1\}$ is seen as a discrete two element set,
$(\Z_2)^{*m}$ is a simplicial sphere of
dimension $m-1$ (which may be identified with the boundary complex of the
$m$-dimensional cross polytope), and thus the target space
\[
    (\Z_2)^{*\N}\ \ =\ \ \bigcup_{m \ge 1}(\Z_2)^{*m}
\]
is the infinite-dimensional sphere.
The simplicial map $\ell$ induces a map of simplicial chain complexes
\[
   \ell_\#: C_\bullet(X) \rightarrow C_\bullet((\Z_2)^{*\N})
\]
with coefficients in some chosen ring~$R$ (when talking about
orientation classes, this is usually specified to be $\Z$ if the pseudomanifold is orientable, and
$\Z/2$ otherwise).

Here the natural symmetry of admissible label patterns, given by reversing
the signs, comes into play. This amounts to the usual free simplicial $\Z_2$-action
on $(\Z_2)^{*\N}$ and the induced action
on its simplicial chain complex. We now re-interpret this: Taking into account that $(\Z_2)^{*\N}$ 
is a contractible space, the chain complex $C_\bullet((\Z_2)^{*\N})$ is a
free resolution of~$R$ over the group ring $R[\Z_2]$ (see Section~\ref{sec:chain_map}).
It is, however, a huge free resolution, of infinite rank, in each
dimension: The standard basis for $C_i((\Z_2)^{*\N})$
consists of all infinite sequences of type
$({*},{+},{-},{*},{-},{*},\dots\,)$ with exactly $i+1$ non-$*$
elements. By \cite[Lemma~7.4]{brown82:_cohom_group}, there is up to homotopy
a unique chain map  to the \emph{minimal resolution}  which
induces the identity of zero dimensional homology groups (which can be canonically
identified with $R$). For $R = \Z$ the minimal resolution is given by
\[
\cdots
     \stackrel{\left(\begin{smallmatrix}+1&+1\\+1&+1\end{smallmatrix}\right)}\longrightarrow\quad
\Z^2
\quad\stackrel{\left(\begin{smallmatrix}-1&+1\\+1&-1\end{smallmatrix}\right)}\longrightarrow\quad
\Z^2
\quad\stackrel{\left(\begin{smallmatrix}+1&+1\\+1&+1\end{smallmatrix}\right)}\longrightarrow\quad
\Z^2
\quad\stackrel{\left(\begin{smallmatrix}-1&+1\\+1&-1\end{smallmatrix}\right)}\longrightarrow\quad
\Z^2  \quad \longrightarrow \quad 0
\]
with the rightmost $\Z^2$ sitting in degree $0$.
The identification of its zeroth dimensional homology
with $\Z$ is induced by
the map ({\em augmentation}) $\Z^2 \to \Z$ represented by the matrix
$\left( \begin{matrix} +1 & +1 \end{matrix} \right)$.

Each such chain map to the minimal resolution can be factored (up to homotopy) through
the canonical map from $C_\bullet((\Z_2)^{*\N})$ to the so-called {\em standard
resolution} \cite[Sect.~I.5]{brown82:_cohom_group}, by simply deleting the $*$s,
and further through the canonical map from the standard resolution
to the so-called {\em normalized standard resolution}, by throwing away those label
patterns that contain two $+$signs or two $-$signs at consecutive places.

For $k=2$ (but not for larger $k$), the normalized standard resolution is isomorphic
to the minimal resolution. One possible chain map is given by
mapping the alternating sequences
$(+1,-1,+1,\dots)\in(\Z_2)^m$ and $(-1,+1,-1,\dots)\in(\Z_2)^m$
into the first and second copy of $\Z$ in $\Z^2$, respectively.

In view of the later generalization to $\Z_k$, we write $\Z_2=\{e,g\}$ with generator $g$, take $R:=\Z$,
identify $M_i=\Z^2$ with the group ring $\Z[\Z_2]=\Z \cdot e \oplus \Z \cdot g$ for $i\ge0$  
and identify the boundary maps $\partial_i: M_i \to M_{i-1}$ in the minimal resolution with
the multiplication with $\tau:=g-e$ for odd~$i$, and with
the multiplication with $\sigma:=e+g$ for even~$i > 0$.
The augmentation map $M_0 \rightarrow \Z$ is defined as
$\alpha e +  \beta g \mapsto \alpha+\beta$.
We finally define the \emph{evaluation}  at $e \in \Z_2$ by
\[
   u : \Z[\Z_2] \rightarrow \Z  \, , ~~  \alpha e + \beta g \mapsto \alpha \, . 
\]
In summary we get the $\Z_2$-Stokes formula
by interpreting the labelling as a simplicial map, then constructing
the chain map from the chain complex of the color sphere to the
minimal resolution, and then evaluating by~$u$.

It it easily checked that this Stokes formula is identical to
the Fan theorem described after Definition~\ref{def:admissible}.

Replacing $u$ by the evaluation at $g$ yields a second Stokes formula obtained from the previous one
by reversing all signs.

However, there are many other isomorphisms from  the
normalized standard resolution to the minimal resolution. These are in one-to-one
correspondence with $\Z$-linear complements (viewed as graded modules)
of the boundary operator in the minimal resolution.
Consequently, there is no ``canonical'' discrete
Stokes formula, even not in the classical case $k=2$.


\section{Resolutions and a chain map}
\label{sec:chain_map}

Let $k\ge2$.  We denote the cyclic group with $k$~elements by $\Z_k$ and write
it multiplicatively as $\Z_k=\set{e,g,\dots,g^{k-1}}$, where $g$ is a
generator of $\Z_k$.  We work over a commutative ring $R$ with~$1$.
We set $\Lambda=R[\Z_k]$, the group ring of~$\Z_k$ over~$R$.

As usual we consider $R$ as a $\Lambda$-module with $g$ acting trivially.
Questions about $\Z_k$-equivariant maps can often be related to the homology
of the group~$\Z_k$, which is by definition the homology of a chain complex
obtained from a free resolution of~$R$. A {\em free resolution} of~$R$ is an
acyclic chain complex of free $\Lambda$-modules
that is augmented with the (non-free) $\Lambda$-module $R$ in dimension $-1$:
\[
  \cdots\ \  \longrightarrow F_3
    \ \ \stackrel{\dd_3}{\longrightarrow}\ \ F_2
    \ \ \stackrel{\dd_2}{\longrightarrow}\ \  F_1
    \ \ \stackrel{\dd_1}{\longrightarrow}\ \  F_0
    \ \ \stackrel{\dd_0}{\longrightarrow}\ \  R \ \longrightarrow\ 0,
\]
or, equivalently, a free chain complex
\[
F_\bullet : \qquad
 \cdots\ \  \longrightarrow F_3
    \ \ \stackrel{\dd_3}{\longrightarrow}\ \ F_2
    \ \ \stackrel{\dd_2}{\longrightarrow}\ \  F_1
    \ \ \stackrel{\dd_1}{\longrightarrow}\ \  F_0\ \longrightarrow\ 0
\]
such that $H_i(F)=0$ for $i>0$ together with a $\Lambda$-linear
isomorphism ({\em augmentation}) $H_0(F)\xto\isom R$.  In the following we use the latter convention.

For our approach, it is important to describe such resolutions
explicitly.

\begin{definition}[Standard resolution] \label{def:standard}
The \emph{standard resolution} of $R$ is given by
\[
 S_\bullet : \qquad
 \cdots\ \    \longrightarrow S_3
    \ \ \stackrel{\dd_3}{\longrightarrow}\ \ S_2
    \ \ \stackrel{\dd_2}{\longrightarrow}\ \  S_1
    \ \ \stackrel{\dd_1}{\longrightarrow}\ \  S_0 \ \longrightarrow\ 0
\]
with modules
\[
    S_r   \ \ \deq\ \ \underbrace{\Lambda\tens_R\cdots\tens_R\Lambda}_{r+1}
\]
and boundary maps
\[
    \dd_r (h_0\tens\cdots\tens h_r) \ \ \deq\ \
    \sum_{i=0}^r (-1)^i\, h_0\tens\cdots\tens\widehat{h_i}\tens\cdots\tens h_r.
\]
with the (usual) convention that $\widehat{h_i}$ denotes omission from
the tensor product.  The boundary maps are defined on the basis
elements $h_0\tens\cdots\tens h_r$ with $h_1,h_2,\dots,h_r \in \Z_k$
and extended to $R$-linear maps.

The diagonal action $g \cdot (h_0 \tens h_1 \tens \cdots \tens h_r) \deq gh_0
\tens gh_2 \dots \tens gh_r$ turns the modules $S_r$ into
$\Lambda$-modules.
It is easily seen that the boundary maps $\dd_r$ are $\Lambda$-linear.
\end{definition}

\begin{definition}[Bar resolution]
A choice of a special basis of the $S_r$ as $\Lambda$-modules gives rise to the
so called \emph{bar resolution}. This particular basis is given by
\[
    [h_1|h_2|\cdots|h_r]\ \ \deq \ \
        e\tens h_1\tens h_1h_2\tens\cdots\tens h_1h_2\cdots h_r
\]
with $h_1, h_2, \dots, h_r \in \Z_k$.  We allow for $r=0$,
i.e.~$[\,]=e\in S_0$.
\end{definition}

This is clearly a basis of $S_r$ as a
$\Lambda$-module and, for example, the elements of the standard $R$-basis are
rewritten as
\[
    h_0 \tens \cdots \tens h_r\ \ =\ \
    h_0[h_0^{-1}h_1|h_1^{-1}h_2|\dots|h_{r-1}^{-1}h_r].
\]

In this basis, the boundary is given by
\begin{eqnarray*}
    \dd_r[h_1|h_2|\cdots|h_r]
        &=&  h_1[h_2|\cdots|h_r] \\
        &&+\ \  \sum_{i=1}^{r-1} (-1)^i
                [h_1|\cdots|h_{i-1}|h_ih_{i+1}|h_{i+2}|\cdots|h_r] \\
        &&+\ \  (-1)^r[h_1|h_2|\cdots|h_{r-1}].
\end{eqnarray*}

\begin{definition}[Minimal resolution]
The \emph{minimal resolution}  is given by
\[  M_\bullet : \qquad
 \cdots \ \  \longrightarrow M_3
    \ \ \stackrel{\dd_3 = m_\tau}{\longrightarrow}\ \ M_2
    \ \ \stackrel{\dd_2 = m_\sigma}{\longrightarrow}\ \  M_1
    \ \ \stackrel{\dd_1 = m_\tau}{\longrightarrow}\ \  M_0\ \longrightarrow\ 0
\]
with $M_i := \Lambda$ for all $i \ge 0$. The boundary maps are defined by
\[
\dd_r\ \ :=\ \
\begin{cases}
    m_\sigma, & \text{if $r$ is even,}\\
    m_\tau,   & \text{if $r$ is odd,}
\end{cases}
\]
where $m_x$ denotes multiplication by $x \in \Lambda$ and
\[\tau = g - e \, , ~~ \sigma = e + g + \cdots + g^{k-1}.\]
\end{definition}

More generally, we define elements
\[
  \tau_r := g^r - e \, , ~~  \sigma_r := e + g + \ldots + g^{r-1} 
\]
for $0 \leq r \leq k$. In particular, $\sigma_0 = 0$, $\sigma_k = \sigma$, $\tau_1 = \tau$, and $\tau_0 = \tau_k = 0$.
The sets
\[
    \Sigma := \{ \sigma_1, \sigma_2, \dots, \sigma_k \} \, , ~~
      T    := \{ e, \tau_1, \tau_2, \dots, \tau_{k-1} \} 
\]
are both bases of
$\Lambda$ as an $R$-module and we have
the identities
\[
    \tau\sigma_i  = \tau_i \, , ~~ \sigma \tau_i = 0
\]
for $1 \leq i \leq k$.
It will therefore be useful to represent $M_i$ in the basis $T$ for even $i$ and in the basis $\Sigma$
for odd $i$.  This choice is justified by the identities
\[
\begin{array}{r@{=}l@{=}l}
    \ker m_\sigma \ \ & \ \  R\tau_1 \oplus R\tau_2 \oplus \cdots \oplus
    R\tau_{k-1} \ \ &\ \  \im m_\tau \\
    \im m_\sigma \ \ &\ \ R \sigma_k \ \ &\ \ \ker m_\tau
\end{array}
\]
which prove that $M_\bullet$ is exact in positive dimensions and, indeed,
a free resolution of $R$ with an augmentation $M_0 \to R$ defined by
\[
      \sum_{i=0}^{k-1} \alpha_i g^i\ \ \mapsto\ \ \sum_{i=0}^{k-1} \alpha_i \, .
\]
Because $S_\bullet$ and $M_{\bullet}$ are free resolutions,
there is a  $\Lambda$-linear chain map $S_\bullet \to M_\bullet$
which is augmentation preserving (and indeed identifies $S_0$ and $M_0$ canonically).  
This chain map is unique up to chain homotopy, 
see \cite[Lemma~7.4]{brown82:_cohom_group}.
The following lemma, which is proved by an easy inductive argument,
shows how we can achieve uniqueness in this situation.

\begin{lemma} \label{lem:uniqueness}
Let $K_r \subset M_r$, $r \geq 0$,  be a collection of $R$-submodules
so that the module $K_{r}$ is an $R$-complement of $\ker \partial_r$ for all
$r \geq 0$ (here,
$\partial_0 : M_0 \to R$ is the augmentation). 
Then there is a unique augmentation preserving
$\Lambda$-linear chain map
\[
    S_\bullet \to M_\bullet
\]
which sends the basis elements $[h_1|h_2|\cdots|h_r]$ from the bar resolution into $K_r$.
\end{lemma}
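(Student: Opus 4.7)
The plan is to build $f_\bullet$ by induction on the degree, specifying the image of each bar-basis element in $K_r$ and then extending $\Lambda$-linearly. The key structural observation powering both existence and uniqueness is that the complement condition forces $\partial_r$ to restrict to an $R$-linear isomorphism
\[
    \partial_r|_{K_r}\ :\ K_r\ \xto{\isom}\ \im\partial_r,
\]
because the $R$-linear map $\partial_r$ has kernel $\ker\partial_r$ and $K_r$ is an $R$-complement of this kernel. (For $r=0$ we regard $\partial_0$ as the augmentation $M_0\to R$, so $\im\partial_0=R$.)

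For the base case $r=0$, the bar basis of $S_0$ consists of the single element $[\,]=e$, and preservation of augmentations forces $\partial_0(f_0([\,]))=1\in R$. By the isomorphism above there is exactly one $k_0\in K_0$ with $\partial_0(k_0)=1$; setting $f_0([\,])\deq k_0$ and extending $\Lambda$-linearly gives the unique admissible $f_0$. For the inductive step, suppose $f_0,\ldots,f_{r-1}$ have been constructed. For each bar basis element $b=[h_1|\cdots|h_r]$ of $S_r$, I would first check that
\[
    f_{r-1}(\partial_r b)\ \in\ \ker\partial_{r-1}\ =\ \im\partial_r,
\]
using $\partial_{r-1}f_{r-1}=f_{r-2}\partial_{r-1}$ (the inductive chain-map property, valid for $r\ge 2$; for $r=1$ the same argument works with $\partial_0$ replaced by the augmentation and $f_{-1}=\mathrm{id}_R$) together with $\partial^2=0$ in $S_\bullet$, and exactness of $M_\bullet$ in positive degrees (and at $M_0$ with respect to the augmentation), which was established just above the lemma. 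Then by the displayed isomorphism there is a unique element $f_r(b)\in K_r$ with $\partial_r f_r(b) = f_{r-1}(\partial_r b)$. Defining $f_r$ on each bar basis element this way and extending $\Lambda$-linearly produces a unique $\Lambda$-linear $f_r$ carrying bar basis elements into $K_r$.

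Finally, I would verify that $\partial_r\cmps f_r=f_{r-1}\cmps\partial_r$ on all of $S_r$, not merely on the bar basis: both sides are $\Lambda$-linear (the boundary maps in $S_\bullet$ and $M_\bullet$ are $\Lambda$-linear, and $f_r,f_{r-1}$ are $\Lambda$-linear by construction), and they agree on the $\Lambda$-basis $\{[h_1|\cdots|h_r]\}$, hence they coincide.

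I do not anticipate a serious obstacle: the proof is a clean dimension-by-dimension lifting argument. The only place requiring genuine care is checking, at each inductive step, that $f_{r-1}(\partial_r b)$ really lies in $\im\partial_r$, so that the complement $K_r$ can be used to produce a preimage; this is where the exactness of $M_\bullet$ (verified via the identities $\tau\sigma_i=\tau_i$ and $\sigma\tau_i=0$ and the explicit kernel/image description preceding the lemma) feeds into the argument.
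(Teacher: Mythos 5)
Your proof is correct and is exactly the ``easy inductive argument'' the paper alludes to (the paper states Lemma~\ref{lem:uniqueness} without giving a proof). Both the key structural observation---that the complement condition makes $\partial_r|_{K_r}$ an $R$-isomorphism onto $\im\partial_r$---and the use of exactness of $M_\bullet$ plus the inductive chain-map identity to put $f_{r-1}(\partial_r b)$ into $\im\partial_r$ are precisely what the intended induction requires.
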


The $R$-bases  $T$ and $\Sigma$ of $\Lambda$ introduced above motivate a
feasible choice for such a complementary graded submodule $K_\bullet \subset M_\bullet$: For $s \geq 0$ we
set
\begin{eqnarray*}
     K_{2s} & := & R e \, , \\
     K_{2s+1} & := & R \sigma_1 \oplus \ldots \oplus R \sigma_{k-1} \, .
\end{eqnarray*}
Note that for $k=2$, this specializes to $K_i := Re$ for all $i \geq 0$.

Our aim is to give an explicit description of the resulting chain  map $S_\bullet \to M_\bullet$.
It relies on the following notion.

\begin{definition}[Strongly alternating elements] \label{def:stralt}
    Let $h_0, h_1, \dots, h_{2s}\in \Z_k$. We call
    the element $h_0 \tens \cdots \tens h_{2s}$ of~$S_{2s}$ \emph{strongly alternating} if
    its bar representative
    \[
        h_0 \tens \cdots \tens h_{2s}\ \ =\ \ g^{a_0} [g^{a_1}|\cdots|g^{a_{2s}}],
    \]
    with $0 \le a_i < k$ for all $i = 0, \dots, 2s$, satisfies
    \[
    a_{2i+1} + a_{2i+2}\ \ \ge\ \ k\qquad\text{for all }0 \le i \le s - 1.
    \]
    (In other words:
    passing from $h_{2i}$ to $h_{2i+1}$ and from $h_{2i+1}$ to $h_{2i+2}$
    amounts to multiplications with elements $g^{\alpha}$ and
    $ g^{\beta}$, $0 \leq \alpha,\beta < k$,
    so that $\alpha + \beta \geq k$.)
    Let $h_0, h_1, \dots, h_{2s+1}\in \Z_k$. We call
    the element $h_0 \tens \cdots \tens h_{2s+1}$ of~$S_{2s+1}$ 
    \emph{strongly alternating} if there is an $a\in\Z_k$ such that
    $a\tens h_0\tens\cdots\tens h_{2s+1}$ is strongly alternating, i.e.~if
    $h_1\tens\cdots\tens h_{2s+1}$ is strongly alternating and $h_0\ne h_1$.
\end{definition}

\begin{definition}[Alternating elements]
The element $h_0\tens\cdots\tens h_r$ of~$S_r$
is \emph{alternating} if $h_{i+1}\ne h_i$ for all $0\le i<r$.
\end{definition}

\begin{remark} In general, strongly alternating elements are
alternating. The two
notions coincide if and only if $k=2$. In this case we get back
the alternating label patterns introduced in Definition \ref{def:admissible}.
\end{remark}

The strongly alternating elements are $\Z_k$-invariant in the sense that
$x = h_0 \tens h_1 \tens \cdots \tens h_{2s}$ is strongly
alternating if and only if $gx$~is.

After these preparations, we can write down the chain map $f_\bullet\colon S_\bullet\to M_\bullet$
corresponding to the above choice of $K_\bullet \subset M_\bullet$.

The $\Lambda$-linear maps $f_r: S_r \to \Lambda$ are given by
\begin{eqnarray*}
    f_{2s}([h_1|\cdots|h_{2s}]) & \deq &
    \begin{cases}
        e, & \text{if $[h_1|\cdots|h_{2s}]$ is strongly alternating, and}\\
        0, & \text{otherwise,}
    \end{cases}\\[3mm]
    f_{2s+1}([h_1|\cdots|h_{2s+1}]) & \deq & \sigma_i \,
    f_{2s}([h_2|\cdots|h_{2s+1}])\qquad \text{ for $h_1 = g^i$, $0 \leq i < k$.}
\end{eqnarray*}

\begin{proposition}\label{prop:f-chain-map}
    The collection of the maps $f_r$ is a chain map from the standard
    resolution to the minimal resolution, i.e.\ for all~$s\ge0$ the diagrams
    \[
    \begin{CD}
        S_{2s+1} @>\dd>> S_{2s}     \\
        @VVf_{2s+1}V @VVf_{2s}V\\
        M_{2s+1} @>\dd>> M_{2s}     \\
    \end{CD}
    \qquad\text{and}\qquad
    \begin{CD}
        S_{2s+2} @>\dd>> S_{2s+1}     \\
        @VVf_{2s+2}V @VVf_{2s+1}V\\
        M_{2s+2} @>\dd>> M_{2s+1}     \\
    \end{CD}
    \]
    commute:
   \begin{align*}
       f_{2s}(\dd c) &\ \ =\ \ \tau f_{2s+1}(c)
           \qquad\text{ for $c \in S_{2s+1}$ and }\\
           f_{2s+1}(\dd c) &\ \ =\ \ \sigma f_{2s+2}(c)
           \qquad\text{ for $c \in S_{2s+2}$.}
   \end{align*}
\end{proposition}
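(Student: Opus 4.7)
The plan is to reduce both commutativity squares to identities on bar basis elements via $\Lambda$-linearity of all maps. Fix $c = [h_1|\cdots|h_r]$ with $h_j = g^{b_j}$ and $0 \le b_j < k$, and expand $\dd c$ by the bar boundary formula. The algebraic identities $\tau\sigma_i = \tau_i = g^i - e$ and $\sigma\tau_i = 0$, together with the ``carry'' relation $\sigma_i + g^i\sigma_j = \sigma_{i+j}$ when $i + j \le k$ and $\sigma_i + g^i\sigma_j = \sigma + \sigma_{i+j-k}$ otherwise, bridge the explicit formula for $f_\bullet$ and the boundary maps on $M_\bullet$.

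For the first square, take $c \in S_{2s+1}$. The defining formula gives
\[
  \tau f_{2s+1}(c) \;=\; \tau\sigma_{b_1}\, f_{2s}([h_2|\cdots|h_{2s+1}]) \;=\; (h_1 - e)\,\epsilon_0\,e,
\]
where $\epsilon_0 \in \{0,1\}$ indicates that $[h_2|\cdots|h_{2s+1}]$ is strongly alternating. On the other hand $f_{2s}(\dd c) = h_1\epsilon_0\,e + \sum_{j=1}^{2s}(-1)^j \epsilon_j\,e - \epsilon_{2s+1}\,e$, where $\epsilon_j$ is the indicator that the $j$-th face of $\dd c$ (the merge $h_jh_{j+1}$ for $1\le j\le 2s$, or an endpoint deletion for $j \in \{0, 2s+1\}$) is strongly alternating. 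Cancelling the common $h_1\epsilon_0\,e$, the square reduces to the scalar identity $\sum_{j=0}^{2s+1}(-1)^j \epsilon_j = 0$ in $R$.

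For the second square, take $c \in S_{2s+2}$. The right-hand side equals $\sigma f_{2s+2}(c) = \epsilon\,\sigma$, with $\epsilon \in \{0,1\}$ indicating strong alternation of $c$. The left-hand side $f_{2s+1}(\dd c)$ becomes a signed $\Lambda$-linear combination of terms $\sigma_{\gamma_j}\epsilon'_j$ (the $j=0$ term additionally carries the coefficient $h_1$), where $\gamma_j$ and $\epsilon'_j$ are read off from the $j$-th face of $\dd c$. After collapsing adjacent $\sigma$-terms via the carry relations, the identity reduces to a coefficient-wise equality in the $R$-basis $\{e, g, \ldots, g^{k-1}\}$ of $\Lambda$, with every coefficient equal to $\epsilon$.

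The main obstacle is the combinatorial verification of these two identities. The strongly-alternating condition pairs consecutive bar entries $(b_{2i+1}, b_{2i+2})$ and demands $b_{2i+1} + b_{2i+2} \ge k$ in each pair, so one must track how this condition survives the contractions in $\dd c$. The key technical step is to pair consecutive boundary contractions $(j, j+1)$ whose contributions cancel in the alternating sum, leaving only terms that combine into the right-hand side. The case analysis splits on the parity of $j$, on whether $b_j + b_{j+1} < k$ or $\ge k$ (wrap-around), and on whether the affected position lies inside or at the boundary of an alternating pair. The base case $s=0$ (where $f_1(\dd[h_1|h_2]) = h_1\sigma_{b_2} - \sigma_{(b_1+b_2)\bmod k} + \sigma_{b_1}$ reduces via the carry identity to $\epsilon\sigma$) illustrates the general mechanism.
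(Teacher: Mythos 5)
Your reduction to scalar identities is sound in principle, but you explicitly stop short of the step that carries the real content. For the first square you land on the claim
\[
  \sum_{j=0}^{2s+1}(-1)^j \epsilon_j \;=\; 0,
\]
and for the second on a collapse of a signed sum of $\sigma_{\gamma_j}\epsilon'_j$-terms to $\epsilon\sigma$ coefficientwise. You then call the verification of these identities ``the main obstacle'' and sketch a pairing-cancellation scheme for it, but never carry it out. That sketch is exactly where the difficulty lives: the number of indicators $\epsilon_j$ grows with $s$, each merged face $h_j h_{j+1}$ can wrap modulo $k$, the merge may fall inside an alternating pair $(2i+1,2i+2)$ or straddle two pairs, and for odd-length faces an extra nondegeneracy condition ($h_1\ne e$) enters. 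There is no obvious involution pairing the surviving faces two-by-two with matching signs and equal indicators. As written, the proof has a genuine gap: the crux is asserted, not proved.

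The paper's proof avoids this entirely by exploiting $\dd\circ\dd=0$ inductively. Having established the first identity for $s-1$, it deduces $\sigma f_{2s}(\dd c) = f_{2s-1}(\dd\dd c) = 0$, so $f_{2s}(\dd c)\in\ker m_\sigma = \im m_\tau$. Since $\tau f_{2s+1}(c)$ also lies in $\im m_\tau$, one only has to compare coefficients of the $\tau_i$ ($1\le i\le k-1$) in the basis $T$, and these can be read off from the single face $h_1[h_2|\cdots|h_{2s+1}]$ because every other face contributes a multiple of $e$. The second identity is handled symmetrically: $f_{2s+1}(\dd c)\in\ker m_\tau = R\sigma$, so only the coefficient of $\sigma_k$ (equivalently, of $g^{k-1}$) matters, again determined by the first face alone. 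In other words, the homological constraint localizes the verification to one coefficient per square and eliminates the global cancellation argument your reduction would require. If you want to salvage your direct approach, you would need to actually establish the alternating-sum identity — but I would recommend adopting the paper's inductive device instead, since it makes the cancellations automatic rather than something to be tracked case by case.
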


\begin{proof}
We proceed by induction on~$s$.  
Let $c=[g^r|h_2|\cdots|h_{2s+1}]$,
$0\le r<k$.  If $s=0$ then $f_0(\dd
c)=f_0(\dd[g^r])=f_0(g^r[\,]-[\,])=g^r-e=\tau_r=\tau\sigma_r=\tau
f_1([g^r])=\tau f_1(c)$.  If $s>0$ then by induction $\sigma
f_{2s}(\dd c)=f_{2s-1}(\dd\dd c)=0$, so $f_{2s}(\dd c)\in\ker
m_\sigma=\im m_\tau$, and to prove $f_{2s}(\dd c)=\tau f_{2s+1}(c)$
it suffices to show that for $1\le i\le k-1$ the coefficient of
$\tau_i$ in $f_{2s}(\dd c)$ with respect to the basis~$T$ equals the
coefficient of $\sigma_i$ in $f_{2s+1}(c)$ with respect to the 
basis~$\Sigma$.  Now $f_{2s}(\dd [g^r|h_2|\cdots|h_{2s+1}])$ equals $g^r
f_{2s}([h_2|\cdots|h_{2s+1}])$ plus a multiple of~$e$, so the
coefficient of $g^i$ is $1$ if $[h_2|\cdots|h_{2s+1}]$ is strongly alternating and
$i=r$, and it is $0$ otherwise.  Comparison with the definition of
$f_{2s+1}$ proves the first equation.

Let $c=[g^t|g^r|h_3|\cdots|h_{2s+2}]$, $0\le t,r<k$.  From the first
equation we know that $\tau f_{2s+1}(\dd c)=f_{2s}(\dd\dd c)=0$, so
$f_{2s+1}(\dd c)\in\ker m_\tau=\im m_\sigma$, and to prove
$f_{2s+1}(\dd c)=\sigma f_{2s+2}(c)$ it suffices to show that the
coefficient of $\sigma_{k}$ in $f_{2s+1}(\dd c)$ with respect to the
basis~$\Sigma$ equals the coefficient of $e$ in $f_{2s+2}(c)$ with
respect to the basis~$T$.  Now $f_{2s+1}(\dd c)$ equals $g^t
f_{2s+1}([g^r|h_3|\cdots|h_{2s+2}])$ plus a linear combination of
the~$\sigma_i$ with $1\le i<k$, so the
coefficient of $\sigma_k$, which equals the coefficient of $g^{k-1}$
with respect to the basis $\{e,g,\ldots, g^{k-1} \}$, equals~$1$ if
$t+r\ge k$ and $[h_3|\cdots|h_{2s+2}]$ is strongly alternating and $0$~otherwise.
So it equals~$1$ if
$[g^t|g^r|h_3|\cdots|h_{2s+2}]$ is strongly alternating and $0$~otherwise.  This proves
the second equation.
\end{proof}

\begin{remark} \label{rem:normalized}
The maps $f_r$ are zero on all non-alternating, or \emph{degenerate},
basis elements.  These generate a subcomplex
of the standard resolution and $f_\bullet$
factors through the quotient by this subcomplex. This quotient
is the so called \emph{normalized standard resolution}. The
induced map from the normalized standard resolution to
the minimal resolution is an isomorphism if and only if $k=2$.
In this case we recover exactly the chain map described
in Section \ref{sec:basics}.
 \end{remark}

\section{Labellings and the combinatorial \boldmath$\Z_k$-Stokes theorem}

\label{sec:labelled_complexes}
\label{sec:ZpStokes}\label{sec:stokes}

Fix an integer $k \ge 2$ and consider an (ordered) simplicial complex $X$ with
vertices labelled with elements of $\Z_k \times \N$. This labelling
is a map
\[
    \ell : V \to \Z_k\times\N
\]
defined on the vertex set $V = V(X)$. For a vertex $v \in V$ and $\ell(v) = (s,c) \in
\Z_k \times \N$ we will call $c$ the \emph{color} and $s$ the \emph{sign} of
$v$. A labelling is called \emph{admissible} if the two vertices of an edge
always carry different colors or the same sign (compare
Definition~\ref{def:admissible}).

Let $X$ be a simplicial complex with an admissible $\Z_k \times
\N$-labelling~$\ell$ and let $C_\bullet(X) = C_\bullet(X;R)$ denote its
simplicial chain complex with coefficients in $R$. We define maps
\[
    h^\ell_r\colon C_r(X)  \to S_r
\]
by
\[
    \simp{v_0,\dots,v_r}  \mapsto
    \begin{cases}
        \sign \pi \cdot s_{\pi(0)}\tens\cdots\tens s_{\pi(r)},&
        \text{ for } \pi\in {\rm Sym}(k) \text{ with } c_{\pi(0)}<\cdots<c_{\pi(r)},
        \text{ and}\\
            0,& \text{ if } |\{ c_i:0\le i\le r \}| < r + 1,
    \end{cases}
\]
where $\ell(v_i)=(s_i,c_i)$ for all $i = 0, \dots, r$.

We call $s_{\pi(0)}\tens\cdots\tens s_{\pi(r)}$ the \emph{pattern}
assigned to $\simp{v_0,\dots,v_r}$ by $\ell$. The coefficient $\sign \pi$ amounts to counting
patterns ``according to orientation''.

The family of maps $(h^\ell_r)$ can alternatively be described as the
composition of the chain map
\[
    \ell_\# : C_\bullet(X) \to C_\bullet((\Z_k)^{*\N})
\]
induced by the map $X \to (\Z_k)^{*\N}$ determined by the labelling $\ell$ and the map of chain complexes
\[
    h_\bullet : C_\bullet( (\Z_k)^{*\N} ) \to S_\bullet
\]
which is given on the ordered simplices by
\[
\simp{ (s_0,c_0), (s_1,c_1), \dots, (s_r,c_r) } \mapsto s_0 \tens s_1
\tens \cdots \tens s_r,
\]
with $c_0 < c_1 < \cdots < c_r$. Hence, the map $h^{\ell}_\bullet$ is itself a map of chain complexes.

Recall the chain map
\[
      f_\bullet : S_\bullet \to M_\bullet
\]
from Section \ref{sec:chain_map}. The combinatorial Stokes theorem is now
a consequence of the fact that the chain map
\[
     f_\bullet \circ h^{\ell}_{\bullet} : C_\bullet(X)  \to M_\bullet
\]
commutes with differentials:  For $x \in C_{r}(X)$, $r \geq 1$, we have
\begin{eqnarray*}
     f_{r-1}(h^{\ell}_{r-1} (\partial x)) & = & \sigma f_r (h^{\ell}_r (x))
     \quad\textrm{for }r\textrm{ even,} \\
     f_{r-1}(h^{\ell}_{r-1} (\partial x)) & = & \tau   f_r (h^{\ell}_r (x)) 
     \quad\textrm{for }r\textrm{ odd.}
\end{eqnarray*}
In order to obtain a counting formula, we compose the maps occuring in these 
equations with the evaluation at $e \in \Z_k$,  
\[
    u :\ \Lambda\ \to\ R, \qquad 
  \sum_{i=0}^{k-1} \alpha_i \cdot g^i\ \mapsto\ \alpha_0 , 
\]
and --- together with the explicit description of $f_{\bullet}$ --- obtain

\begin{theorem}[Combinatorial Stokes formula]\label{thm:stokes}
Let $X$ be a simplicial complex with an admissible
$\Z_k\times\N$-labelling~$\ell$ and let $x\in C_r(X)$ be an $r$-chain.
Then depending on the parity of $r$, we have the following identities:

\begin{itemize}
    \item ($r=2s$). The number of label patterns $h_0 \otimes \ldots \otimes h_{2s-1}$ in $\partial x$
                    so that $g \otimes h_0 \otimes \ldots \otimes h_{2s-1}$ is strongly alternating
                    equals the sum of all strongly alternating label patterns occuring in~$x$.
    \item ($r = 2s+1$). The number of label patterns $h_0 \otimes \ldots \otimes h_{2s}$ occuring in $\partial x$
                        that are strongly alternating and satisfy $h_0 = e$ is
                        equal to the number of label patterns $h_0 \otimes \ldots \otimes h_{2s+1}$
                        occuring in $x$ so that $e \otimes h_0 \otimes \ldots \otimes h_{2s+1}$ is strongly
                        alternating
                        minus the number of label patterns $h_0 \otimes \ldots \otimes h_{2s+1}$
                        occuring in $x$ so that $g \otimes h_0 \otimes \ldots \otimes h_{2s +1}$ is strongly alternating.
\end{itemize}
Here all label patterns are counted with multiplicities and according to orientation.
\end{theorem}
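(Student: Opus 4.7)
The plan is to derive both identities from the chain map property of $f_\bullet\cmps h^\ell_\bullet$, which is recorded in display form just before the statement of the theorem, by applying the evaluation $u$ to both sides and unwinding the closed expressions for the $f_r$ from Proposition~\ref{prop:f-chain-map}.

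Fix a pattern $y=h_0\tens h_1\tens\cdots\tens h_r$ occurring in $h^\ell_r(x)$. Writing $h_i=g^{d_i}$ with $0\le d_i<k$ and $y=g^{c_0}[g^{c_1}|\cdots|g^{c_r}]$ in its bar form, I would first record the closed expressions
\[
  f_{2s}(y)\ =\ h_0\cdot\chi[y\text{ strongly alternating}],\quad
  f_{2s+1}(y)\ =\ h_0\,\sigma_{c_1}\cdot\chi[h_1\tens\cdots\tens h_{2s+1}\text{ strongly alternating}],
\]
obtained from the definition of $f_\bullet$ via $\Lambda$-linearity. In the even case $r=2s$, the right-hand side $u(\sigma\cdot f_{2s}(h^\ell_{2s}(x)))$ collapses because $\sigma\cdot h_0=\sigma$ and $u(\sigma)=1$, so it counts strongly alternating patterns in $x$ with their orientation signs. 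For the left-hand side, $u(h_0\sigma_{c_1})$ is the coefficient of $e$ in $g^{c_0}+g^{c_0+1}+\cdots+g^{c_0+c_1-1}$; a short case split on $c_0=0$ vs.\ $c_0\ge 1$ shows this equals $1$ exactly when $((c_0-1)\bmod k)+c_1\ge k$, which is precisely the $i=0$ strong-alternation condition for $g\tens y\in S_{2s}$; the remaining conditions on $g\tens y$ reduce term-by-term to the tail condition already built into $f_{2s-1}$.

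For the odd case $r=2s+1$, the left-hand side simplifies directly: $u(f_{2s}(y))=\delta_{h_0,e}\cdot\chi[y\text{ strongly alternating}]$, giving the count required by the theorem. For the right-hand side, using $\tau\sigma_{c_1}=g^{c_1}-e$ together with $h_0g^{c_1}=h_1$ yields $\tau f_{2s+1}(y)=(h_1-h_0)\cdot\chi[h_1\tens\cdots\tens h_{2s+1}\text{ strongly alternating}]$, hence $u(\tau f_{2s+1}(y))=(\delta_{h_1,e}-\delta_{h_0,e})\,\chi[\,\cdots\,]$. Under this tail condition a case analysis on $(c_0,c_1)$, partitioned into the regimes $\{c_0=0,c_1=0\}$, $\{c_0=0,c_1\ge 1\}$, $\{c_0\ge 1,c_0+c_1=k\}$ and the complement, shows that $\delta_{h_1,e}-\delta_{h_0,e}$ equals $\chi[e\tens y\text{ str.\ alt.}]-\chi[g\tens y\text{ str.\ alt.}]$ pattern-by-pattern: both sides evaluate to $0,-1,+1,0$ respectively. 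Summing these local contributions with the orientation signs carried by $h^\ell_\bullet$ yields the asserted signed equality.

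The main obstacle is the carry arithmetic modulo $k$ underlying both case analyses: prepending $e$ or $g$ to $y$ shifts the first bar exponent from $c_0$ to $c_0$ or to $(c_0-1)\bmod k$, and one must check that this shift preserves the strongly alternating condition precisely when the corresponding coefficient of $e$ (in $h_0\sigma_{c_1}$, or in $h_1-h_0$) is non-zero. Once this dictionary between $u$-evaluation and strong alternation is in place, the theorem follows formally from the chain-map identity.
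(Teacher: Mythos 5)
Your proposal is correct and takes essentially the same route as the paper: the paper's (implicit) proof simply composes the displayed chain-map identities $f_{r-1}(h^\ell_{r-1}(\partial x)) = \sigma f_r(h^\ell_r(x))$ (resp.\ $\tau f_r(h^\ell_r(x))$) with the evaluation $u$ and appeals to the explicit formula for $f_\bullet$, and you have carried out exactly this plan. The only thing you add is the detailed carry-arithmetic case split verifying $u(h_0\sigma_{c_1}) = \chi[(c_0-1 \bmod k)+c_1 \ge k]$ and the pattern-by-pattern check of $\delta_{h_1,e}-\delta_{h_0,e} = \chi[e\tens y\text{ str.\ alt.}]-\chi[g\tens y\text{ str.\ alt.}]$, which the paper leaves to the reader; your computations are correct.
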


It is remarkable, and not clear a priori, that our approach via chain
complexes and chain maps leads to a counting formula of the stated
form, where --- apart from possible multiplicities imposed by the chain $x$
itself --- all relevant label patterns are counted with multiplicities
$\pm 1$.

For $k=2$, we recover the classical Fan theorem mentioned in the
introduction after Definition \ref{def:admissible}.  If we replace the
evualuation map $u$ by evaluation at another group element, we 
obtain the above identities with all labels shifted cyclically.

\section{Equivariant labellings and \boldmath$\Z_k$-Tucker lemmas}
\label{sec:Tuckerlemmas}

Even though the group $\Z_k$ has played an important role in the
definiton of the objects of Section~\ref{sec:chain_map}, group actions
did not occur in the results of Section~\ref{sec:stokes}.  We will now
consider a simplicial complex $X$ with $\Z_k$ acting on it as a group
of simplicial homeomorphisms (called a {\em $\Z_k$-complex} for short).
This induces an action of $\Z_k$ on
$C_\bullet(X)$ as a group of chain maps, which makes $C_\bullet(X)$ into a
$\Lambda$-chain complex.

As before, we consider the action of $\Z_k$ on the set of labels $\Z_k \times \N$
by cyclically shifting the signs, i.e. $g (s,c) := (gs,c)$. With this
action we say that a labelling $\ell$ on a $\Z_k$-complex $X$ is
\emph{equivariant} if $\ell(gv) = g \ell(v)$ for all $g \in \Z_k$ and
all vertices~$v$ of~$X$.

An equivariant labelling on $X$ can only exist if $X$ is a free
$\Z_k$-space.

If $X$ a $\Z_k$-complex with an admissible equivariant labelling
$\ell$,  then the chain map  $h_\bullet^\ell$ considered in the last section
is obviously $\Lambda$-linear.

\begin{definition} \label{def:gensphere} Let $X$ be a free $\Z_k$-complex and let $r \geq 0$.
A {\em generalized $r$-sphere}
in $C_\bullet(X)$
is a sequence $(x_i)_{0 \leq i \leq r}$ of chains $x_i \in C_i(X)$
satisfying
\[\dd x_{i}\ \ =\ \ \begin{cases}
\sigma x_{i-1},&\text{if $i$ is even,} \\
\tau x_{i-1},&\text{if $i$ is odd}
\end{cases}
\]
for all $0 < i \leq r$. 
\end{definition}

The terminology is motivated by the following example.

\begin{example}\label{ex:Sk}
Let $k>2$ and $X$ be the triangulation of
$S^{2m+1}=S^1\join\cdots\join S^1$ obtained by
triangulating each of the $m+1$ copies of $S^1$ as a $k$-gon.
We number the copies starting with~$0$ and for each~$i$, $0\le i\le m$, 
choose a vertex~$u^i$ in the $(m-i)$-th copy. Let $\Z_k$ act on $X$ in 
such a way that each of the $1$-spheres is invariant under the action
and $gu^i$ is a neighbor of~$u^i$.  We denote the oriented edge from
$u^i$ to~$gu^i$ by~$w^i$.  We define several chains in $C_\bullet(X)$,
starting with
\begin{align*}
o_0^i&\deq\tau u^i,&o_1^i&\deq\sigma w^i.
\end{align*}
So $o^i_0=\dd w^i$ is an orientation chain of a $0$-sphere in the
$(m-i)$-th copy of~$S^1$, and $o^i_1$ an orientation chain of
this $1$-sphere.  Setting
\begin{align*}
x_{2s}&\ \ \deq\ \  u^s\join o_1^{s-1}\join o_1^{s-2}\join\dots\join o_1^0,\\
x_{2s+1}&\ \ \deq\ \ w^s\join o_1^{s-1}\join o_1^{s-2}\join\dots\join o_1^0,
\end{align*}
each $x_j$ is the orientation chain of a $j$-disk, and
\begin{align*}
\tau x_{2s}&\ \ =\ \ o_0^s\join o_1^{s-1}\join o_1^{s-2}\join\dots\join o_1^0,\\
\sigma x_{2s+1}&\ \ =\ \ o_1^s\join o_1^{s-1}\join\dots\join o_1^0\\
\end{align*}
are orientation chains of spheres.  We obtain
\[
\dd x_{2s+1}\ =\ \tau x_{2s},\qquad
\dd x_{2s+2}\ =\ \sigma x_{2s+1}.
\]
\end{example}

\begin{example} \label{ex:EZk}
Let $k\ge2$, $d\ge0$.  The construction of the preceding example
translates to $(\Z_k)^{*(d+1)}$, since $\Z_k\join\Z_k$ contains the
barycentric subdivision of a $k$-gon with the natural $\Z_k$-action.
We set
\begin{align*}
u^i&\deq\simp{(e,d-2i)},\\
w^i&\deq\simp{(e,d-2i-1),(g,d-2i)}-\simp{(e,d-2i-1),(e,d-2i)}
\end{align*}
and continue as in Example~\ref{ex:Sk} to obtain
chains $x_i\in C_i((\Z_k)^{*(d+1)})$ for $0\le
i\le d$ satisfying the conditions of Definition~\ref{def:gensphere}.
Again, each $x_i$ is the orientation chain of an $i$-disk, while
$\sigma x_i$ is the orientation chain of an $i$-sphere for odd~$i$ and
$\tau x_i$ is the orientation chain of an $i$-sphere for even~$i$.
\end{example}

Now the generalized Tucker lemma has the following form. As before,
the map $u : \Lambda \to R$ is the evaluation at $e \in \Z_k$.

\begin{theorem}[Generalized $\Z_k$-Tucker lemma]\label{thm:count-transfer}
Let $X$ be a $\Z_k$-complex which is equipped with an
equivariant admissible $\Z_k\times\N$-labelling $\ell$. Let $(x_i)_{0\le i\le r}$ be
a generalized $r$-sphere in $C_\bullet(X)$ for some $r\ge0$.
We set
\[
  \alpha_i \ \ :=\ \ u \big(\sigma \cdot (   f_\bullet \circ  h_\bullet^{\ell})  (x_i) \big).
\]
(For even $i$, this just counts the number of strongly alternating label 
patterns in $x_i$.)
Then \begin{itemize}
    \item the number $\alpha_0$ equals the sum of the coefficients of the
          $0$-simplices in~$x_0$ (and hence does not depend on~$\ell$);
    \item we have $\alpha_i\equiv \alpha_0\pmod k$ for all $0\le i\le r$.
\end{itemize}
\end{theorem}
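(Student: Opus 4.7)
The plan is to work with the single element $y_i := (f_\bullet \circ h^\ell_\bullet)(x_i) \in M_i = \Lambda$ and reduce the entire statement to arithmetic in $\Lambda$. Two preliminary observations drive this reduction: first, since $\ell$ is equivariant, $h^\ell_\bullet$ is $\Lambda$-linear, and $f_\bullet$ is $\Lambda$-linear by construction, so the composite is $\Lambda$-linear. Second, the composite is a chain map by Proposition~\ref{prop:f-chain-map} together with the discussion preceding Theorem~\ref{thm:stokes}. Applying these facts to the generalized-sphere relations $\partial x_i = \sigma x_{i-1}$ ($i$ even) and $\partial x_i = \tau x_{i-1}$ ($i$ odd), and expanding the boundary in $M_\bullet$ on the other side, gives the key identities
\[
    \sigma y_i \ =\ \sigma y_{i-1} \quad (i\text{ even}), \qquad
    \tau  y_i \ =\ \tau  y_{i-1} \quad (i\text{ odd}).
\]

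For the base case I unfold the definitions on a vertex $v$ with $\ell(v) = (s_v, c_v)$: one has $h^\ell_0(v) = s_v$, and $f_0$ is the identity on $M_0 = S_0 = \Lambda$ (because the empty bar expression is vacuously strongly alternating). Hence, for $x_0 = \sum_v a_v v$, I get $y_0 = \sum_v a_v s_v$. Since $\sigma g^j = \sigma$ for every $j$, this yields $\sigma y_0 = \bigl(\sum_v a_v\bigr)\sigma$, and since $u(\sigma) = 1$ I conclude $\alpha_0 = \sum_v a_v$, independent of~$\ell$, as claimed.

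For the inductive step, the even case is immediate: $\alpha_i = u(\sigma y_i) = u(\sigma y_{i-1}) = \alpha_{i-1}$, an exact equality. The odd case is where the modulus enters. From $\tau y_i = \tau y_{i-1}$ I get $y_i - y_{i-1} \in \ker m_\tau = \im m_\sigma = R\sigma$, so $y_i = y_{i-1} + c\sigma$ for some $c \in R$. Using the identity $\sigma^2 = k\sigma$ (the one arithmetical fact about $\Z_k$ that does the work), this gives $\sigma y_i = \sigma y_{i-1} + c k \sigma$, and therefore $\alpha_i \equiv \alpha_{i-1} \pmod{k}$. A routine induction then yields $\alpha_i \equiv \alpha_0 \pmod{k}$ for all $0 \le i \le r$.

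I do not anticipate a substantive obstacle: the conceptual work has already been done in constructing the $\Lambda$-linear chain map $f_\bullet \circ h^\ell_\bullet$ to the minimal resolution, and the point of a generalized sphere is precisely to provide a chain that translates under this map into a sequence matching the alternating boundary operators $m_\sigma, m_\tau$ of $M_\bullet$. The parenthetical remark (that $\alpha_i$ counts strongly alternating label patterns in $x_i$ for even $i$) then falls out of the explicit formula for $f_{2s}$: in that case $y_i$ is a signed sum of group elements indexed by strongly alternating patterns in $x_i$, and the operation $u \circ m_\sigma$ collapses each $g^j$ to $1$ and so simply tallies the patterns with their orientation signs and multiplicities.
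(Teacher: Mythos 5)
Your proof is correct and follows essentially the same route as the paper's: translate the generalized-sphere relations through the $\Lambda$-linear chain map $f_\bullet\circ h^\ell_\bullet$, note the even step gives exact equality while the odd step gives $y_i-y_{i-1}\in\ker m_\tau=\im m_\sigma$, and close with $\sigma^2=k\sigma$. The only cosmetic difference is that you introduce the shorthand $y_i$ and spell out the base case, which the paper dismisses as immediate.
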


\begin{remark}
For $k=2$ it is convenient to work over~$R=\Z/2$.  In this case
$\sigma=\tau$ and $\alpha_i$ is the parity of the number of
alternating  in~$x_i$, which equals the parity of the number
of $+$alternating simplices in~$\sigma x_i$.

If $X$ is a centrally symmetric triangulation of the $r$-sphere $S^r$
that refines the hyperoctahedral triangulation, we obtain the Tucker
lemma (Proposition~\ref{prop:tucker_lemma}) by choosing $x_i$ to be the
orientation chain of an $i$-dimensional hemisphere.  Then $x_0$ is a
chain consisting
of a single point, hence $\alpha_i=1\in\Z/2$ for all~$i$, and we
obtain that the number of $+$alternating simplices in~$X$ is odd.
\end{remark}

\begin{proof}[Proof of Theorem \ref{thm:count-transfer}]
The first assertion on the value of~$\alpha_0$ is immediate.
We now show that $\alpha_{i+1}\equiv\alpha_i\pmod k$ for all $0\le i<r$.
For  $0 \leq 2s+1 < r$ this assertion follows by composing the equation
\[
     \sigma  (f h^{\ell} (x_{2s+2}))  =
     f h^{\ell}( \partial x_{2s+2}) =
     f h^{\ell}( \sigma x_{2s+1})  = 
     \sigma ( f h^{\ell} (x_{2s+1})) 
\]
with the map $u$. The first of these equation uses the fact that $f_\bullet$ and $h_\bullet^{\ell}$ are chain
maps, the second one the definition of a generalized sphere and the last one the equivariance of 
 $f_\bullet$ and $h_\bullet^{\ell}$.

Now let $0\le 2s<r$.  In order to show
$\alpha_{2s+1}\equiv\alpha_{2s}\pmod k$, it suffices to establish
\[
   \sigma \big( f_{2s+1}(h^\ell(x_{2s+1}))-f_{2s}(h^\ell(x_{2s})) \big) \in k\Lambda
\]
and because $\sigma^2=k\sigma$, this will be be a consequence of
\[
   f_{2s+1}(h^\ell(x_{2s+1}))-f_{2s}(h^\ell(x_{2s}))\in\im m_\sigma=\ker m_\tau \, .
\]
But indeed, 
\[
    \tau f_{2s+1}(h^\ell(x_{2s+1})) =f_{2s}(h^\ell(\dd x_{2s+1})) = f_{2s}(h^\ell(\tau x_{2s}))
    =\tau f_{2s}(h^\ell(x_{2s}))  
\]
finishing the proof of Theorem \ref{thm:count-transfer}. 
\end{proof}

\begin{remark} In order to put Definition  
\ref{def:gensphere} and Theorem \ref{thm:count-transfer} 
into a more general perspective, we 
observe that the chains $x_i$ of a generalized $r$-sphere
define a $\Lambda$-chain map $x\colon M_\bullet^{\le
  r}\to C_\bullet(X)$, where $M_\bullet^{\le r}$ denotes the truncation of
the minimal resolution in degree~$r$.  Theorem~\ref{thm:count-transfer}
follows from the fact that the chain map $f\cmps
h^\ell\cmps x\colon M_\bullet^{\le r}\to M_\bullet$ is determined up to
homotopy by the induced map $R\isom H_0(M^{\le r})\to H_0(M)\isom R$, 
which is multiplication by~$\alpha_0$. In essence, the inductive and more 
explicit procedure presented above is based on 
a systematic study of the connecting homomorphisms in cohomology 
resulting from the exact short exact sequences
\[
  \sigma C_\bullet(X)\xto{{\rm incl_\ast}}C_\bullet(X)\xto{m_\tau}\tau C_\bullet(X)
\]
and 
\[
   \tau C_\bullet(X)\xto{{\rm incl_\ast}}C_\bullet(X)\xto{m_\sigma}\sigma C_\bullet(X) \, . 
\]
\end{remark}

From Theorem~\ref{thm:count-transfer} we can derive the following
invariance property of $\alpha_i$ under a change of labellings.

\begin{corollary}\label{cor:alpha-independent}
Let $X$ be a free $\Z_k$-complex, let $r\ge0$  and $x\in C_r(X)$.
If $r$ is even, assume that $\dd (\tau x)=0$,  if $r$ is odd,
assume that $\dd (\sigma x)=0$.
For an arbitrary admissible $\Z_k\times\N$-labelling~$\ell$, set
\[
 \alpha := u \big( \sigma\cdot (  f_\bullet \circ h_\bullet^{\ell}) (x) \big)  \, .
\]
Then the congruence class of~$\alpha$ modulo~$k$ does not depend on
the choice of the labelling~$\ell$.
\end{corollary}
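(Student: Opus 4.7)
The plan is to extend $x$ to a generalized $r$-sphere $(x_i)_{0\le i\le r}$ with $x_r=x$, built purely from the $\Lambda$-chain complex $C_\bullet(X)$ without reference to the labelling, and then to read off the independence from Theorem~\ref{thm:count-transfer}.

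First I would build the sequence $(x_i)$ inductively downward from $i=r$. Since $\Z_k$ acts freely on $X$, each $C_i(X)$ is a free $\Lambda$-module, and so the identities
\[
\ker m_\sigma \;=\; \im m_\tau, \qquad \ker m_\tau \;=\; \im m_\sigma
\]
observed in $\Lambda$ in Section~\ref{sec:chain_map} carry over verbatim to every chain module of $X$. Setting $x_r:=x$, the hypothesis $\dd(\tau x)=0$ (for $r$ even) translates into $\tau \dd x_r = 0$, hence $\dd x_r\in\ker m_\tau=\im m_\sigma$, so there is $x_{r-1}\in C_{r-1}(X)$ with $\dd x_r=\sigma x_{r-1}$; symmetrically, $\dd(\sigma x)=0$ for $r$ odd yields some $x_{r-1}$ with $\dd x_r=\tau x_{r-1}$. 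Once $x_{i+1},x_i$ are constructed compatibly with Definition~\ref{def:gensphere}, applying $\dd$ to the defining relation gives either $\sigma\dd x_i=\dd\dd x_{i+1}=0$ or $\tau\dd x_i=0$; hence $\dd x_i$ again lies in the appropriate image, and the induction proceeds down to $x_0$. The essential point is that this whole construction depends only on $x$ and the $\Lambda$-structure of $C_\bullet(X)$, and not on $\ell$.

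Having produced the generalized $r$-sphere, I would apply Theorem~\ref{thm:count-transfer}. Writing $\alpha_i := u\bigl(\sigma\cdot(f_\bullet\cmps h_\bullet^\ell)(x_i)\bigr)$, the theorem gives $\alpha_i\equiv\alpha_0\pmod{k}$ for $0\le i\le r$, and $\alpha_0$ is the sum of the $R$-coefficients of the $0$-simplices in $x_0$. In particular, $\alpha=\alpha_r\equiv\alpha_0\pmod{k}$, and since $x_0$ was constructed without reference to $\ell$, the residue $\alpha_0\bmod k$ is independent of the labelling. Given two admissible labellings $\ell_1,\ell_2$, applying this argument with the same extension $(x_i)$ shows that the corresponding $\alpha$'s are both congruent to $\alpha_0$ modulo $k$, which is the claim.

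The main obstacle is the inductive extension step: one must know that the boundary of each $x_i$ always lands in the correct image, which is exactly where freeness of the $\Z_k$-action becomes indispensable. Once the kernel-image identities $\ker m_\sigma=\im m_\tau$ and $\ker m_\tau=\im m_\sigma$ are transferred from $\Lambda$ to the chain modules of $X$, the remainder of the argument is a formal consequence of Theorem~\ref{thm:count-transfer}.
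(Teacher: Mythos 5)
Your proposal is correct and follows essentially the same route as the paper's own proof: extend $x$ downward to a generalized $r$-sphere $(x_i)$ with $x_r=x$, using freeness of $C_\bullet(X)$ as a $\Lambda$-complex to transfer the identities $\ker m_\sigma=\im m_\tau$, $\ker m_\tau=\im m_\sigma$ to the chain modules, then invoke Theorem~\ref{thm:count-transfer} to conclude $\alpha\equiv\alpha_0\pmod k$ with $\alpha_0$ independent of~$\ell$.
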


\begin{proof}
We will see that there exists a generalized sphere $(x_i)_{0 \leq i \leq r}$ 
 with $x_r=x$.  Consequently
$\alpha=\alpha_r\equiv\alpha_0\pmod k$, and $\alpha_0$ does not depend
on~$\ell$.

The chains $x_i$ can be constructed recursively starting with $x_r=x$.
To see this, assume that for a chain $y$ the condition $\dd(\sigma
y)=0$ holds.  Then $\sigma \dd y=0$, and since $C_\bullet(X)$ is a free
$\Lambda$-complex, this implies the existence of~$\bar y$ with $\dd
y=\tau\bar y$.  It further follows that $\dd(\tau\bar y)=\dd(\dd
y)=0$.  Analogously the condition $\dd(\tau y)=0$ implies the existence of
$\bar y$ with $\dd y=\sigma\bar y$ and $\dd(\sigma\bar y)=0$.
\end{proof}

\begin{corollary}\label{cor:ZkTucker0}
  Let $X$ be any $\Z_k$-equivariant
  subdivision of the simplicial complex~$(\Z_k)^{*(d+2)}$.
  There is a subcomplex $Y$ of $X$, homeomorphic to a $(d+1)$-sphere,
  such that for every admissible equivariant $\Z_k\times\N$-labelling~$\ell$
  of $X$, the number of $(d+1)$-simplices of $Y$ to which $\ell$ assigns
  strongly alternating patterns, counted as in the definition of
  $\alpha_{d+1}$ in Theorem~\ref{thm:count-transfer}, is congruent to~$1$ modulo~$k$.
\end{corollary}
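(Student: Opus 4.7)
The plan is to derive the Corollary from Theorem~\ref{thm:count-transfer} by exhibiting a specific generalized $(d+1)$-sphere via the iterated-join construction of Example~\ref{ex:EZk}. First I would apply Example~\ref{ex:EZk} (with the example's parameter set to $d+1$) inside the complex $(\Z_k)^{*(d+2)}$ to produce a generalized $(d+1)$-sphere $(x_i)_{0\le i\le d+1}$ whose initial chain is the single $0$-simplex $x_0 = \langle(e,d+1)\rangle$ of coefficient~$1$. By that example, $x_{d+1}$ is the orientation chain of a $(d+1)$-disk, and $\tau x_{d+1}$ (when $d+1$ is even) or $\sigma x_{d+1}$ (when $d+1$ is odd) is the orientation chain of an embedded combinatorial $(d+1)$-sphere in $(\Z_k)^{*(d+2)}$. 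Since $X$ is a $\Z_k$-equivariant subdivision of $(\Z_k)^{*(d+2)}$, I would transport all of these chains into $C_\bullet(X)$ via the standard $\Lambda$-linear simplicial subdivision chain map, and take $Y \subset X$ to be the subdivided sphere so obtained.

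Next, I would apply Theorem~\ref{thm:count-transfer} to this transported generalized sphere in $C_\bullet(X)$. Since $x_0$ is a single $0$-simplex with coefficient $+1$, $\alpha_0 = 1$, and the theorem yields $\alpha_{d+1} \equiv 1 \pmod k$ for every admissible equivariant labelling $\ell$. Unpacking $\alpha_{d+1} = u(\sigma\cdot(f_\bullet\cmps h^\ell_\bullet)(x_{d+1}))$ with the explicit description of $f_\bullet$ from Section~\ref{sec:chain_map}, this quantity is a weighted count of strongly alternating label patterns on those $(d+1)$-simplices of $X$ that appear in the support of $x_{d+1}$, which lies inside~$Y$; the $(d+1)$-simplices of $Y$ outside $\operatorname{supp}(x_{d+1})$ carry coefficient~$0$ and contribute nothing. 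This is exactly the count prescribed by the Corollary.

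The main technical obstacle is the equivariant transport to a general subdivision $X$: I would need to check that the simplicial subdivision chain map can indeed be chosen $\Z_k$-equivariantly and $\Lambda$-linearly so that the defining identities $\partial x_{i+1} = \tau x_i$ and $\partial x_{i+1} = \sigma x_i$ of the generalized sphere are preserved. This is essentially equivariant barycentric subdivision, but requires bookkeeping so that the inductive join structure from Example~\ref{ex:EZk} passes through intact (in particular, the subdivided analogues of $o^j_0$ and $o^j_1$ remain orientation chains of subdivided $0$- and $1$-spheres, compatibly with $\tau$ and $\sigma$). Once this is in place, the conclusion follows immediately from Theorem~\ref{thm:count-transfer}.
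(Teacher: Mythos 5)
Your proposal is correct and takes essentially the same route as the paper: invoke Example~\ref{ex:EZk} (with its parameter set to $d+1$) to produce a generalized $(d+1)$-sphere in $C_\bullet((\Z_k)^{*(d+2)})$ with $\alpha_0=1$, push it into $C_\bullet(X)$ by the equivariant subdivision chain map, and apply Theorem~\ref{thm:count-transfer}. The technical concern you flag at the end is in fact automatic and requires less than you suggest: since the subdivision is $\Z_k$-equivariant, the subdivision chain map is $\Lambda$-linear, and any $\Lambda$-linear chain map carries relations $\dd x_{i+1}=\tau x_i$ and $\dd x_{i+1}=\sigma x_i$ to the corresponding relations in $C_\bullet(X)$; one does not need the join structure, or the identification of the subdivided $o^j_0,o^j_1$ as orientation chains, to survive subdivision.
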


\begin{proof}
Let $\mathop{\rm sd}:C_\bullet((\Z_k)^{*(d+1)})\to C_\bullet(X)$ be the
equivariant subdivision chain map and $x_i\in C_i((\Z_k)^{*(d+2)})$
the chains constructed in Example~\ref{ex:EZk}.  The chains
$\mathop{\rm sd}(x_i)$ satisfy the conditions of
Theorem~\ref{thm:count-transfer} with $\alpha_0=1$.
\end{proof}

We formulate a consequence of this as a non-existence result
for certain equivariant maps.

\begin{definition}\label{def:Ealt}
For $d\ge0$ and $m\ge d+1$,
we denote by $(\Z_k)^{*m}_{\alt\le d}$
the subcomplex of the join~$(\Z_k)^{*m}$
whose facets consist of all simplices
$\simp{i_1,\dots,i_m}$ ($i_j\in\Z_k$) with at most
$d$ jumps, that is, such that
$\#\{j\in[m-1]:i_j\neq i_{j+1}\}\le d$.
\end{definition}

The following is also implied by the Tucker--Fan lemma that
Meunier \cite[Thm.~4]{meunier:_z_fan05} obtained for odd~$k$.

\begin{corollary}\label{cor:ZkTucker1}
  Let $k\ge2$, and let $X$ be any $\Z_k$-equivariant
  subdivision of the simplicial complex~$(\Z_k)^{*(d+2)}$,
  then there is no equivariant simplicial $\Z_k$-map
\[
\ell: X\ \ \longrightarrow\ \  (\Z_k)^{*m}_{\alt\le d}.
\]
\end{corollary}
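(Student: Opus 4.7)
The plan is to deduce this non-existence result directly from Corollary~\ref{cor:ZkTucker0} by showing that the very structural constraint imposed by landing in $(\Z_k)^{*m}_{\alt\le d}$ forbids the label patterns whose count the corollary forces to be nonzero. So I would argue by contradiction: assume an equivariant simplicial $\Z_k$-map $\ell\colon X\to(\Z_k)^{*m}_{\alt\le d}$ exists. Viewed via the inclusion $(\Z_k)^{*m}_{\alt\le d}\hookrightarrow(\Z_k)^{*\N}$, this $\ell$ is an equivariant admissible $\Z_k\times\N$-labelling of~$X$, so Corollary~\ref{cor:ZkTucker0} applies and yields a $(d+1)$-sphere $Y\subseteq X$ on which the weighted count of $(d+1)$-simplices carrying strongly alternating patterns is congruent to~$1$ modulo~$k$.

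Next I would translate the defining condition of $(\Z_k)^{*m}_{\alt\le d}$ into a statement about label patterns. A $(d+1)$-simplex of~$Y$ is mapped to a simplex $\langle(s_0,c_0),\dots,(s_{d+1},c_{d+1})\rangle$ in $(\Z_k)^{*m}_{\alt\le d}$ with $c_0<\dots<c_{d+1}$, and by Definition~\ref{def:Ealt} its sign-sequence $(s_0,\dots,s_{d+1})$ has at most $d$~indices $j$ with $s_j\ne s_{j+1}$, so at least one pair of consecutive signs coincides. Unwinding Definition~\ref{def:stralt} in bar notation $g^{a_0}[g^{a_1}|\dots|g^{a_{d+1}}]$ with $0\le a_i<k$, being strongly alternating forces all $a_i\ne0$ for $1\le i\le d+1$ (in the odd case via the auxiliary $a\in\Z_k$), i.e.~all $d+1$ consecutive pairs must differ. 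Hence no $(d+1)$-simplex in~$Y$ can carry a strongly alternating pattern at all.

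Combining the two steps, the weighted count of strongly alternating $(d+1)$-simplices in~$Y$ must be~$0$, contradicting the congruence $\equiv 1\pmod k$ from Corollary~\ref{cor:ZkTucker0}, so no such $\ell$ exists. The only potentially delicate point is ensuring that the notion of ``strongly alternating'' in both parities of Definition~\ref{def:stralt} really does imply full alternation ($d+1$ jumps on a $(d+1)$-simplex); I would therefore briefly record this elementary observation as a preliminary remark, after which the corollary falls out. I do not foresee any genuine obstacle here — the substantive work lives in Theorem~\ref{thm:count-transfer} and Corollary~\ref{cor:ZkTucker0}, and the present corollary is the clean topological reformulation of that combinatorial count.
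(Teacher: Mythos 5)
Your argument is correct and is essentially the paper's proof: both reduce the statement to Corollary~\ref{cor:ZkTucker0} via the observation that landing in $(\Z_k)^{*m}_{\alt\le d}$ precludes strongly alternating $(d+1)$-patterns because strongly alternating implies alternating (i.e., all $d+1$ consecutive sign changes occur), whereas simplices of $(\Z_k)^{*m}_{\alt\le d}$ permit at most $d$ jumps. The paper phrases this in one line using the remark that strongly alternating patterns are alternating; your unwinding of Definition~\ref{def:stralt} to show all $a_i\ne 0$ is just a more explicit rendering of the same observation.
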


\begin{proof}
Since all strongly alternating patterns are alternating, an equivariant map
$\ell\colon X\to(\Z_k)^{*m}_{\alt\le d}$ would establish an admissible
$\Z_k\times\N$-labelling of~$X$ in which no $(d+1)$-simplex gets a strongly
alternating
pattern, contradicting Corollary~\ref{cor:ZkTucker0}.
\end{proof}

\begin{remark}
The spaces  $(\Z_k)^{*m}_{\alt\le d}$ will be reconsidered in 
Section~\ref{sec:rainbow}.  In Corollary~\ref{cor:rainbow} we prove the
existence of a $\Z_k$-equi\-variant map from $(\Z_k)^{*m}_{\alt\le d}$
to the $d$-dimensional space $(\Z_k)^{*(d+1)}$.  Thus
Corollary~\ref{cor:ZkTucker1} also follows directly from Dold's theorem
\ref{thm:dold} below.
\end{remark}

Instead of constructing the chains in Theorem~\ref{thm:count-transfer}
explicitly as in Example~\ref{ex:EZk},
we can also give a homological condition that ensures
their existence.  We illustrate this by giving a proof of Dold's
theorem.

\begin{proposition}\label{prop:hom-bound}
Let $X$ be a simplicial complex with a free $\Z_k$-action,
and $R$ be a commutative ring with~$1$ such that $kR\ne R$. Let  $r\ge0$.
If $\widetilde H_i(X;R)\isom0$ for all $i\leq r$ then for every equivariant
admissible $\Z_k\times\N$-labelling there is an $(r+1)$-simplex of $X$
which is labelled with $r+2$ distinct colors and a strongly alternating pattern.
\end{proposition}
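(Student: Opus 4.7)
The plan is to exhibit a generalized $(r+1)$-sphere $(x_i)_{0\le i\le r+1}$ in $C_\bullet(X)$ whose base chain $x_0$ is a single vertex, and then feed this into Theorem~\ref{thm:count-transfer}. The hypothesis $\widetilde H_i(X;R)\cong 0$ for $i\le r$ is exactly what is needed to push the recursive construction up to level $r+1$; the freeness of the $\Z_k$-action enters because then $C_\bullet(X;R)$ is a genuine $\Lambda$-chain complex and the symbols $\tau x_i$, $\sigma x_i$ live in it.

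First I would set $x_0\deq\simp{v}$ for any vertex $v$ of $X$ and check that $\alpha_0=u(\sigma\cdot f_0(h^\ell_0(x_0)))=u(\sigma\cdot s_v)=u(\sigma)=1$. Next, I build $x_1,x_2,\dots,x_{r+1}$ inductively. Suppose $x_0,\dots,x_i$ have been constructed with $0\le i\le r$. For even $i$ one has $\dd(\tau x_i)=\tau\,\dd x_i$, which vanishes either because $i=0$ and $\dd v=0$, or because by induction $\dd x_i=\sigma x_{i-1}$ and $\tau\sigma=0$; for odd $i$ the analogous computation $\dd(\sigma x_i)=\sigma\tau x_{i-1}=0$ applies. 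In each case $\tau x_i$ (resp.\ $\sigma x_i$) is a cycle in $C_i(X;R)$, and the assumption $\widetilde H_i(X;R)\cong 0$ produces an $x_{i+1}\in C_{i+1}(X;R)$ with the required boundary. This yields the desired generalized $(r+1)$-sphere.

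Now Theorem~\ref{thm:count-transfer} gives $\alpha_{r+1}\equiv\alpha_0=1\pmod k$ in $R$. Because $kR\ne R$, the element $1\in R$ is not in $kR$, so $\alpha_{r+1}\ne 0$ in $R$. Unwinding the definitions, $\alpha_{r+1}=u\bigl(\sigma\cdot f_{r+1}(h^\ell_{r+1}(x_{r+1}))\bigr)$ is a signed sum (with multiplicities taken from the coefficients of $x_{r+1}$) of contributions of the $(r+1)$-simplices in the support of $x_{r+1}$; a simplex contributes nonzero only when the map $h^\ell_{r+1}$ does not annihilate it, i.e.\ when its $r+2$ vertex colors are pairwise distinct, and when moreover its induced pattern is strongly alternating (this is exactly what $f_{r+1}$ detects). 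Since the total is nonzero, at least one such simplex must exist, giving the claim.

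There is no genuinely hard step here; the only mildly subtle point is the consistent bookkeeping showing that the ``strongly alternating'' condition and the ``distinct colors'' condition are precisely the conditions that survive $f_\bullet\cmps h^\ell_\bullet$. The rest is a straightforward homological bootstrap combined with the $\Z_k$-Stokes count already proved in Section~\ref{sec:stokes}.
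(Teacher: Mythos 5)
Your proposal is correct and follows essentially the same route as the paper's proof: construct a generalized $(r+1)$-sphere by induction starting from a single vertex, feed it into Theorem~\ref{thm:count-transfer}, and use $kR\neq R$ to conclude $\alpha_{r+1}\neq 0$. The only point worth tightening is the base step: for $i=0$ you write that $\tau x_0$ is a cycle because $\dd v=0$, but what is really needed is that $\tau x_0=\simp{gv}-\simp{v}$ is a \emph{reduced} $0$-cycle, i.e.\ has zero augmentation (automatic because $\tau$ augments to $0$), so that $\widetilde H_0(X;R)\isom 0$ applies — a detail the paper handles by explicitly invoking $\widetilde H_{-1}$ and $\widetilde H_0$.
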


\begin{proof}
It will suffice to construct  a generalized $(r+1)$-sphere
 $(x_i)_{0 \leq i \leq r+1}$ with 
$\alpha_0=1$, because the conclusion $\alpha_{r+1} \ne 0$ of 
Theorem \ref{thm:count-transfer} shows the
existence of the desired $(r+1)$-simplex.

Since $\widetilde H_{-1}(X)\isom0$, $X$ is nonempty and we can set
$x_0=\simp v$ for a simplex v, so $\alpha_0=1$.  Then $\tau x_0$ is a
reduced $0$-cycle.  Further, because $\widetilde
H_{0}(X)\isom0$, we can choose $x_1$ with $\dd x_1=\tau x_0$.

Now assume that for some $i$ with  $1\le i\leq r$, the $x_j$ for $j\le i$ are
already chosen.  In case of odd $i$, we have $\dd(\sigma x_i)=\sigma\dd
x_i=\sigma\tau x_{i-1}=0$, and since $H_i(X)\isom0$, there is an
$x_{i+1}$ such that $\dd x_{i+1}=x_i$.
In case of even $i$,
we get $\dd(\tau x_i)=\tau\dd x_i=\tau\sigma x_{i-1}=0$, and since
$H_i(X)\isom0$, there is an $x_{i+1}$ such that $\dd x_{i+1}=x_i$.  In
both cases $x_{i+1}$ with the desired property can be found.
\end{proof}

\begin{theorem}[Dold \cite{Dold}]\label{thm:dold}
Let $X$ and $Y$ be a simplicial complexes with free $\Z_k$-actions.
Let $r\ge 0$ and $R$ be a commutative ring with~$1$ such that $kR\ne R$.
If $\widetilde H_i(X;R)\isom 0$ for all $i\leq r$ and $\dim Y\le r$
then there is no equivariant simplicial map from $X$ to~$Y$.
\end{theorem}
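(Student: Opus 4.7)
The plan is to reduce Dold's theorem to Proposition~\ref{prop:hom-bound} by exhibiting, from a hypothetical equivariant simplicial map $f\colon X\to Y$, a concrete equivariant admissible $\Z_k\times\N$-labelling of $X$ in which no $(r+1)$-simplex can carry $r+2$ distinct colors. Since Proposition~\ref{prop:hom-bound} guarantees such an $(r+1)$-simplex, this yields the desired contradiction.

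As a preparatory step, I would replace $Y$ by an equivariant barycentric subdivision and approximate $f$ equivariantly by a simplicial map on a subdivision of $X$ (this preserves the hypothesis $\widetilde H_i(X;R)\isom 0$ for $i\le r$ as well as the freeness of the action, so we may continue to call the subdivided objects $X$ and $Y$). After this step, no simplex of $Y$ contains two vertices lying in the same $\Z_k$-orbit: if some simplex $\sigma$ of the subdivision were setwise fixed by a nontrivial $g\in\Z_k$, then $g$ would permute the vertices of $\sigma$ and thus fix its barycenter, contradicting freeness of the action on the geometric realization.

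Next I would choose a transversal $v_1,v_2,\dots$ for the $\Z_k$-orbits on $V(Y)$ and define a map on vertices by $\phi(g^j v_i) \deq (g^j,i)\in \Z_k\times\N$. By the arrangement above, $\phi$ extends to a $\Z_k$-equivariant simplicial map $\phi\colon Y\to (\Z_k)^{*\N}$. The composition $\ell\deq\phi\cmps f\colon X\to (\Z_k)^{*\N}$ is equivariant and simplicial, hence corresponds to an equivariant $\Z_k\times\N$-labelling of $X$. It is admissible because adjacent vertices of $X$ are sent either to the same vertex of $Y$ or to adjacent vertices of $Y$ which, by the orbit-disjointness property, lie in distinct orbits and therefore receive distinct colors.

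Finally, I would invoke Proposition~\ref{prop:hom-bound}, whose hypotheses ($\widetilde H_i(X;R)\isom 0$ for $i\le r$, and $kR\ne R$) are in force. It produces an $(r+1)$-simplex $\sigma$ of $X$ whose $r+2$ vertices carry $r+2$ pairwise distinct colors under $\ell$. Distinct colors come from distinct $\Z_k$-orbits in $V(Y)$, so $f$ is injective on the vertices of $\sigma$, and $f(\sigma)$ is an honest $(r+1)$-simplex of $Y$, contradicting $\dim Y\le r$. The only delicate point is the initial equivariant subdivision/simplicial-approximation step, which brings the target into the form where the canonical orbit-labelling $\phi$ is simplicial; once this is in hand, the result drops out of Proposition~\ref{prop:hom-bound}.
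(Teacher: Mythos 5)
Your overall strategy coincides with the paper's: build an equivariant admissible labelling of $X$ from a hypothetical equivariant simplicial map $f\colon X\to Y$ so that no $(r+1)$-simplex gets $r+2$ colours, then invoke Proposition~\ref{prop:hom-bound}. You are also right that the paper's bald assertion ``$Y$ admits an equivariant admissible $\Z_k\times\N$-labelling'' needs an argument: the natural orbit labelling is admissible exactly when no two adjacent vertices of $Y$ lie in the same $\Z_k$-orbit, and this can fail for a free action when $k\ge 3$ (e.g.\ $\Z_k$ acting on the $k$-gon by one-step rotation is a free simplicial action, but all vertices form a single orbit). Your remedy --- pass to the equivariant barycentric subdivision, on which the orbit labelling is admissible, and replace $f$ by the induced simplicial map on subdivisions --- is the right one, and the closing argument (distinct colours $\Rightarrow$ distinct orbits $\Rightarrow$ $f$ injective on the vertices of the produced $(r+1)$-simplex $\Rightarrow$ $f(\sigma)$ is an $(r+1)$-simplex of $Y$) is fine.

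However, your justification of the key claim --- that in the barycentric subdivision no simplex contains two vertices from the same orbit --- is not valid as written. You argue: ``if some simplex of the subdivision were setwise fixed by a nontrivial $g$, then $g$ would fix its barycenter, contradicting freeness.'' That establishes that the action remains free on the subdivision (which is already part of the hypothesis), but it is not the statement you need. Two vertices $b_\sigma$ and $b_{g\sigma}$ of a simplex of $\mathop{\rm sd}(Y)$ being in the same orbit does not imply that some simplex of $\mathop{\rm sd}(Y)$ is setwise $g$-invariant; the $k$-gon example above shows adjacency within an orbit can coexist with a free action. The correct argument uses the face-chain structure of the subdivision: if $b_\sigma$ and $b_{g\sigma}$ lie in a common simplex of $\mathop{\rm sd}(Y)$, then $\sigma$ and $g\sigma$ are comparable as faces of $Y$; if, say, $\sigma\subsetneq g\sigma$, then applying $g$ repeatedly yields a strictly increasing infinite chain $\sigma\subsetneq g\sigma\subsetneq g^2\sigma\subsetneq\cdots$ of faces of $Y$, which is impossible. (If instead $g\sigma\subsetneq\sigma$, run the same argument with $g^{-1}$.) With this replacement the proof is complete. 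Incidentally, there is no need to invoke simplicial approximation: for a simplicial map $f$, the barycenter map $b_\sigma\mapsto b_{f(\sigma)}$ is already a simplicial map $\mathop{\rm sd}(X)\to\mathop{\rm sd}(Y)$, and it is equivariant when $f$ is.
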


\begin{proof}
The complex $Y$ admits an equivariant admissible
$\Z_k\times\N$-labelling.  No simplex of $Y$ is labelled with more
than $r+1$ colors, since no simplex has more then $r+1$ vertices.
An
equivariant map from $X$ to $Y$ would induce a labelling with these
same properties on~$X$, contradicting Proposition~\ref{prop:hom-bound}.
\end{proof}

\begin{remark}
The ``$\Z_p$-Tucker lemma'' from Ziegler \cite[Lemma 5.3]{Z77}
corresponds to a different type of labelling.
Namely, call an $\Z_k\times\N$-labelling for the vertices
of a simplicial complex $X$ a \emph{weakly admissible} labelling
if there are no $k$ vertices of a $(k-1)$-simplex $\sigma^{k-1}$ that under the
labelling get all the same color (second component), but all differerent
signs (first component).

Such a labelling corresponds to a simplicial map to a label space
$(\partial \sigma^{k-1})^{*\N}$,
an infinite join of boundaries of $(k-1)$-simplices.
The action of $\Z_k$ by cyclically permuting the vertices of $\sigma^k$
is free on the boundary $\partial \sigma^{k-1}$ only if $k=p$ is a
prime.

We leave it as a challenge to construct a chain map from the
chain complex of the corresponding simplicial complex
to the minimal resolution for~$\Z_p$, and to derive a
combinatorial/algebraic proof for \cite[Lemma 5.3]{Z77} from this.
\end{remark}


\section{The \boldmath$\Z_k$-target space for rainbow colorings}\label{sec:rainbow}

It was Fan's basic insight from his 1952 paper \cite{fan52}
that one gets meaningful Tucker lemmas  also for labellings
of the vertices of antipodal $d$-spheres with labels
from $\{\pm1,\pm2,\ldots,\pm m\}$ for $m>d+1$.
With subsequent generalizations from $\Z_2$ to $\Z_k$, and
from $d$-spheres to arbitrary pseudomanifolds (Fan \cite{fan67}),
it now appears that the space $(\Z_k)^{*\N}_{\alt \le d}$ introduced in
Definition~\ref{def:Ealt} is a natural target space for
$\Z_k$-Fan theorems.  Here we determine its homotopy type.

\begin{theorem} \label{thm:rainbow}
All the inclusions of $\Z_k$-spaces
\[
(\Z_k)^{*(d+1)} =
(\Z_k)^{*(d+1)}_{\alt\le d} \ \ \subset\ \
(\Z_k)^{*(d+2)}_{\alt\le d} \ \ \subset\ \ \cdots \ \
(\Z_k)^{*m}_{\alt\le d} \ \ \subset\ \ \cdots \ \
(\Z_k)^{*\N}_{\alt \le d}\ =\! \bigcup_{m\ge d+1}(\Z_k)_{\alt \leq d}^{*m}.
\]
are strong deformation retracts.
\end{theorem}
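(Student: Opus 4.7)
\emph{Proof plan.}
It suffices to show that each consecutive simplicial inclusion $X_m:=(\Z_k)^{*m}_{\alt\le d}\hookrightarrow X_{m+1}$ is a $\Z_k$-equivariant strong deformation retract; the direct-limit case $X_m\hookrightarrow(\Z_k)^{*\N}_{\alt\le d}$ then follows by a standard mapping-telescope concatenation of the individual homotopies, and any inclusion $X_m\hookrightarrow X_{m'}$ for $m<m'$ is a composition of such single-step retracts. For a single step, decompose $X_{m+1}=X_m\cup\bigcup_{c\in\Z_k}\mathrm{Star}((c,m+1))$, where each closed star is the simplicial cone $(c,m+1)\ast L_c$ over its link $L_c\subseteq X_m$; by definition $L_c$ consists of those simplices $\tau$ of $X_m$ for which $\tau\cup\{(c,m+1)\}$ has at most $d$ jumps. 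Two stars cannot share a vertex at position $m+1$, so they intersect only inside $X_m$.

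The heart of the proof is to establish that each link $L_c$ is \emph{contractible}, which I would do by induction on $d$. The base $d=0$ is immediate, since $L_c$ is then the single $(m-1)$-simplex spanned by $(c,1),\dots,(c,m)$. For the inductive step I partition simplices of $L_c$ according to their usage of position $m$, obtaining $L_c=B\cup\bigcup_{c'\ne c}C_{c'}$, where $B=(c,m)\ast L_c^{(m-1,d)}$ and $C_{c'}=(c',m)\ast L_{c'}^{(m-1,d-1)}$ (the superscripts denote the analogous link construction at the reduced parameters). A direct comparison of the defining jump inequalities yields $L_{c'}^{(m-1,d-1)}\subseteq L_c^{(m-1,d)}$, so each $C_{c'}$ attaches to $B$ along a subcomplex of the base of $B$. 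Now $B$ is a simplicial cone and therefore contractible, and each base $L_{c'}^{(m-1,d-1)}$ is contractible by the inductive hypothesis (the constraint $m-1\ge(d-1)+1$ is exactly $m\ge d+1$). Since a cone over a contractible subcomplex admits a strong deformation retract onto its base (cofibration plus homotopy equivalence of contractible spaces), one retracts each $C_{c'}$ onto its base; the individual retractions glue consistently because every pairwise overlap $C_{c'}\cap C_{c''}$ lies in the shared bases, where all retractions act as the identity. The resulting strong deformation retract $L_c\to B$, combined with the contractibility of $B$, yields contractibility of $L_c$.

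Given that $L_c$ is contractible, the inclusion $L_c\hookrightarrow (c,m+1)\ast L_c$ is a cofibration between contractible CW complexes and hence a strong deformation retract. Since these star-wise retractions all restrict to the identity on the shared subcomplex $X_m$, they assemble into a strong deformation retract $X_{m+1}\to X_m$. For $\Z_k$-equivariance, fix the retraction on $\mathrm{Star}((e,m+1))$ and transport it across the free orbit $\{(g^i,m+1)\}_{i\in\Z_k}$ by the group action; consistency is automatic because every transported retraction equals the identity on $X_m$. I expect the main technical obstacle to be the inductive contractibility of $L_c$, in particular the careful bookkeeping of jump counts under the $B$-versus-$C_{c'}$ decomposition and the verification that the cone-wise retractions glue continuously across their common bases inside $B$.
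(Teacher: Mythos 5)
Your proof is correct and follows essentially the same route as the paper's: you induct on $d$ to show the link of each new cone point is contractible, decompose that link by the value at position $m$ into the closed star $B$ of $(c,m)$ and the cones $C_{c'}$, and use the inductive hypothesis on the cone bases together with the cofibration/gluing argument to retract onto $B$. The only cosmetic difference is that the paper packages the gluing through its Lemma~\ref{technical} (adding a cone along a contractible subspace is a strong deformation retract) applied iteratively and retracts $P_{>0}$ onto $(\Z_k)^{*(m-1)}_{\alt\le(d-1)}$ before invoking $(\Z_k)^{*(m-1)}_{\alt\le(d-1)}\subset P_0$, whereas you retract each $C_{c'}$ directly onto its base $L_{c'}^{(m-1,d-1)}\subseteq L_c^{(m-1,d)}$ and glue; the underlying decomposition, induction, and key inclusion are the same.
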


The proof of Theorem~\ref{thm:rainbow} is based on the following
elementary homotopy theory lemma.

\begin{lemma} \label{technical} Let $X$ be a topological space and let $A \subset X$ be a subspace which is
contractible (as a topological space).
Then $X$ is a strong deformation retract of the space $X \cup_A CA$,
the union along $A$ of  $X$ and the cone over $A$.
\end{lemma}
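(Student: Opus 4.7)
The plan is to reduce the claim to the assertion that $A$ is a strong deformation retract of $CA$, and then glue: the homotopy we build on $CA$ will fix $A$ pointwise, so it agrees on the overlap $A=CA\cap X$ with the identity homotopy on $X$; by the universal property of the pushout $X\cup_A CA$ the two homotopies combine to a strong deformation retraction onto $X$.

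To produce the retraction of $CA$ onto $A$, let $H\colon A\times I\to A$ be a contraction of $A$ with $H_0=\mathrm{id}_A$ and $H_1\equiv a_0$, and write $CA=(A\times I)/(A\times\{1\})$, so that the cone point is the class $[a,1]$ (independent of $a$). The natural candidate for the retraction is
\[
  r\colon CA\to A,\qquad r([a,t])\deq H(a,t),
\]
which is well-defined at the cone point because $H(a,1)=a_0$ is independent of $a$, and restricts to the identity on $A=A\times\{0\}$ because $H(a,0)=a$.

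The main technical step is to interpolate between $\mathrm{id}_{CA}$ and $r$ while keeping $A$ pointwise fixed. The naive formula $F_s([a,t])\deq[H(a,st),\,t(1-s)]$ \emph{fails}: for $0<s<1$ it would send the cone point to $[H(a,s),1-s]$, which depends on the choice of representative~$a$. To circumvent this I would split the homotopy into two phases:
\begin{align*}
  F_s([a,t]) &\deq [H(a,2st),\,t]      && \text{for } s\in[0,\tfrac12],\\
  F_s([a,t]) &\deq [H(a,t),\,t(2-2s)]  && \text{for } s\in[\tfrac12,1].
\end{align*}
In phase~1 the height $t$ is left untouched, so the cone point remains at $t=1$ and is automatically well-defined; the factor $2st$ in $H$ vanishes on $A$ and at $s=0$, so $F_0=\mathrm{id}$ and $F_s|_A=\mathrm{id}_A$. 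At $s=\tfrac12$ one has $F_{1/2}([a,t])=[H(a,t),t]$, which already lies in the graph of~$r$. Phase~2 then slides the height down to zero: the cone point moves along the central axis to $[a_0,2-2s]$ (independent of~$a$), $A$ stays fixed because $t=0$ is preserved, and at $s=1$ we recover $F_1=r$. The two formulas match at $s=\tfrac12$, so the combined homotopy is continuous.

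The crux is to recognise that no single interpolation formula can work: the four constraints ``$F_0=\mathrm{id}$'', ``$F_1(CA)\subset A$'', ``$F_s|_A=\mathrm{id}_A$'' and ``$F_s$ is well-defined at the cone point for every $s$'' cannot be simultaneously satisfied by any formula of the shape $F_s([a,t])=[H(a,\alpha(s,t)),\beta(s,t)]$ valid over the whole square $(s,t)\in I^2$; they force contradictory boundary values at the corner $(s,t)=(1,1)$. A two-step homotopy as above is the most economical way to resolve this, and once $F$ is in place, extension by the identity over $X$ completes the proof.
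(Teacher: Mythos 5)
Your proof is correct, and it takes a genuinely more elementary route than the paper. The paper argues at the level of homotopy equivalences: since $A$ is contractible the attaching map $A\hookrightarrow X$ can be replaced (up to homotopy equivalence rel~$X$) by the constant map $A\to\{a\}$, turning $X\cup_A CA$ into $X$ with an unreduced suspension $\Sigma A=CA/A$ wedged on at its south tip; one then retracts $\Sigma A$ to that tip. This invokes (implicitly) the homotopy extension property of the pair $(CA,A)$ and bookkeeping about choosing homotopy inverses that restrict to the identity on~$X$. You instead prove directly that $A$ is a strong deformation retract of~$CA$ whenever $A$ is contractible, writing down an explicit two-stage homotopy that first pushes each cone level along the contraction $H$ of~$A$ (leaving the height~$t$, hence the cone point, untouched) and then slides heights down to~$0$; since your homotopy fixes $A$ pointwise, it glues with the constant homotopy on~$X$ across the pushout $X\cup_A CA$. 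This is more self-contained — the only point-set input it needs is that quotient maps commute with $\times I$ for the formula to descend to $CA\times I$, which holds because $I$ is locally compact Hausdorff. In effect both arguments rest on the same germ (using the contraction of $A$ to collapse the cone), but yours does so without the machinery of cofibrations.

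One small inaccuracy worth flagging: your ``no single interpolation formula can work'' claim at the end is not actually true, and your own construction disproves it. Setting $\alpha(s,t)=\min(2s,1)\,t$ and $\beta(s,t)=\min(1,2-2s)\,t$ gives a \emph{single} continuous formula
\[
  F_s([a,t])\ =\ \bigl[\,H\bigl(a,\min(2s,1)\,t\bigr),\ \min(1,2-2s)\,t\,\bigr]
\]
over all of $(s,t)\in I^2$ that agrees with your two phases and satisfies all four constraints; the four conditions force $\alpha(1,1)=1$ and $\beta(1,1)=0$, which is perfectly consistent. The correct observation is milder — the \emph{na\"ive} bilinear interpolation fails, and one must stagger the contraction in~$H$ against the collapse in height — but this is a matter of staging, not an impossibility. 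Since the no-go claim is only motivational, it does not affect the validity of the proof; I'd simply drop or soften it.
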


\begin{proof} Because $A$ is contractible, we have a homotopy
equivalence
\[
    X \cup_A CA\ \  \simeq\ \  X \cup_{\{a\}} CA
\]
where $a \in A$ is some point and $CA$ is glued to $X$ along
a constant map $A \to \{a\}$. Furthermore, a pair of
homotopy inverse maps can be chosen in such a way that their restrictions to $X$
are identity maps and that the homotopies of their
compositions to the respective identity
maps are constant on $X$. Because the south tip of the unreduced
suspension $\Sigma A = CA / A$ is a strong deformation retract of~$\Sigma A$
($A$ being  contractible), the result follows.
\end{proof}

\begin{proof}[Proof of Theorem~\ref{thm:rainbow}] We fix $k \geq 2$ and start
with some  general observations.
For simplicity, we write $\Z_k$ as $\set{0,1,\dots,k-1}$
instead of $\set{e,g,\dots,g^{k-1}}$ in this section.

Let  $d \geq 0$ and $m \geq d+1$.  We define  $C_{d,m+1,i} \subset (\Z_k)^{*(m+1)}_{\alt \le d}$ as the
(closed) star of the vertex $i \in \Z_k$, where we identify  $\Z_k$ with
its $(m+1)$st copy in $(\Z_k)^{*(m+1)}_{\alt \le d}$. By the definition
of  joins, we can think of $C_{d,m+1,i}$ as
the cone over $C_{d,m+1,i} \cap (\Z_k)^{*m}_{\alt \le d}$ and furthermore know that the intersections
$C_{d,m+1,i} \cap C_{d,m+1,j}$ are contained in $(\Z_k)^{*m}_{\alt \le d}$ for $i \neq j$.

By induction on $d > 0$, we will now prove that each of the intersections
$C_{d,m+1,i} \cap (\Z_k)^{*m}_{\alt \le d}$ is a contractible space (for all $m \geq d+1$). Together with
Lemma \ref{technical} this implies in particular that the inclusion
\[
    (\Z_k)^{*m}_{\alt \le d}\ \ \hookrightarrow\ \  (\Z_k)^{*(m+1)}_{\alt \le d}
\]
is a strong deformation retract, thus proving the theorem.

For $d=0$ and $m \geq 1$, each intersection $C_{d,m+1,i} \cap (\Z_k)^{*m}$ is a full
$(m-1)$-dimensional simplex  $\simp{i,i,i,\ldots, i}$ and
hence contractible.

Now let $d > 0$ and  $m \geq d+1$. For symmetry reasons it is enough to show that the intersection
\[
    P\ \  :=\ \  C_{d,m+1,0} \cap (\Z_k)^{*m}_{\alt \le d}
\]
is contractible. We can write the polyhedron $P$ as the union of two subpolyhedra $P_0$ and $P_{>0}$ defined
as follows: The facets of~$P_0$ are all the facets of $(\Z_k)^{*m}_{\alt \le d}$ whose $m$th vertex
(with respect to the join construction) is equal to $0$. It can be identified with
the closed star in $(\Z_k)^{*m}_{\alt \le d}$ over this vertex and is therefore contractible.
The facets of~$P_{> 0}$ are all the facets of $(\Z_k)^{*m}_{\alt \le (d-1)}$ whose $m$th
vertex is contained in the set $\{1,2, \ldots, k-1\} \subset
\Z_k$. We will show that
\[
    P_0  \ \ \hookrightarrow \ \   P
\]
is a strong deformation retract. Because $P_0$ is contractible, this finally implies contractibility of~$P$.%

We can write
\[
   P_{> 0}\ \  =\ \  (\Z_k)^{*(m-1)}_{\alt \le (d-1)} \cup (C_{d-1,m,1} \cup C_{d-1,m,2} \cup \ldots \cup C_{d-1,m,k-1})
\]
By our induction hypothesis, for each $1 \leq i \leq k-1$ the
intersection
\[
  C_{d-1,m,i} \cap (\Z_k)^{*(m-1)}_{\alt \le (d-1)}
\]
is contractible. Hence, using Lemma \ref{technical} again,
the inclusion
\[
     (\Z_k)^{*(m-1)}_{\alt \le (d-1)}\ \  \hookrightarrow\ \  P_{>0}
\]
is a strong deformation retract. Because on the other hand
\[
    (\Z_k)^{*(m-1)}_{\alt \le (d-1)}\ \  \subset\ \  P_0 ,
\]
this shows that $P_0 \hookrightarrow P_0 \cup P_{ >0} =P $ is a strong deformation retract.
\end{proof}

\begin{corollary}\label{cor:rainbow} For $m \geq  d+1$
the spaces $(\Z_k)^{*(d+1)}$, $(\Z_k)^{*m}_{\alt\le d}$  and
$(\Z_k)^{*\N}_{\alt \le d}$ are all $\Z_k$-homotopy equivalent, in
particular there exists a $\Z_k$-map
$(\Z_k)^{*\N}_{\alt \le d}\to(\Z_k)^{*(d+1)}$.
\end{corollary}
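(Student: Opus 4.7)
The plan is to deduce the corollary directly from Theorem~\ref{thm:rainbow} by upgrading the strong deformation retractions constructed there to $\Z_k$-equivariant ones. Recall from the proof of Theorem~\ref{thm:rainbow} that the inclusion $(\Z_k)^{*m}_{\alt\le d}\hookrightarrow(\Z_k)^{*(m+1)}_{\alt\le d}$ is realised as a strong deformation retract by collapsing, for each $i\in\Z_k$, the closed star $C_{d,m+1,i}$ onto its (contractible) intersection with $(\Z_k)^{*m}_{\alt\le d}$; pairwise intersections of distinct stars are automatically contained in $(\Z_k)^{*m}_{\alt\le d}$.

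The key observation is that $\Z_k$ acts on the set of these stars by $g\cdot C_{d,m+1,i}=C_{d,m+1,gi}$, so they form a single $\Z_k$-orbit. Having picked any strong deformation retraction $r_0$ of $C_{d,m+1,0}$ onto $C_{d,m+1,0}\cap (\Z_k)^{*m}_{\alt\le d}$ via Lemma~\ref{technical}, define $r_i:=g^i\circ r_0\circ g^{-i}$ on $C_{d,m+1,i}$. These agree on pairwise intersections $C_{d,m+1,i}\cap C_{d,m+1,j}$ because those lie in $(\Z_k)^{*m}_{\alt\le d}$ and every $r_i$ restricts there to the identity. Hence the $r_i$ glue into a $\Z_k$-equivariant strong deformation retraction $(\Z_k)^{*(m+1)}_{\alt\le d}\to(\Z_k)^{*m}_{\alt\le d}$, with homotopy obtained in the same equivariant fashion from the homotopy of $r_0$.

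Iterating this construction one step at a time produces $\Z_k$-equivariant strong deformation retractions between any pair of the finite-dimensional spaces in the chain, in particular from $(\Z_k)^{*m}_{\alt\le d}$ onto $(\Z_k)^{*(d+1)}$. For the infinite union $(\Z_k)^{*\N}_{\alt\le d}=\bigcup_m(\Z_k)^{*m}_{\alt\le d}$, the partial retractions concatenate to a well-defined $\Z_k$-equivariant strong deformation retraction onto $(\Z_k)^{*(d+1)}$ in the direct-limit (CW) topology, because the $m$-th retraction is the identity on all previously contracted portions; this is the standard telescope argument for an ascending union of deformation retracts. The final claim of the corollary is then immediate: any one of these $\Z_k$-homotopy equivalences provides the desired $\Z_k$-map $(\Z_k)^{*\N}_{\alt\le d}\to(\Z_k)^{*(d+1)}$.

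The only non-formal point is the equivariant gluing of the local retractions $r_i$, and this reduces to the observation already used in the proof of Theorem~\ref{thm:rainbow} that the stars $C_{d,m+1,i}$ meet only inside the subcomplex being fixed; beyond this, the argument is bookkeeping.
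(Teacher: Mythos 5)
Your proof is correct, but it follows a genuinely different route from the paper. The paper's proof is very short: it observes that the inclusion $(\Z_k)^{*(d+1)}\hookrightarrow (\Z_k)^{*m}_{\alt\le d}$ is a $\Z_k$-equivariant map which is a (non-equivariant) homotopy equivalence by Theorem~\ref{thm:rainbow}, and then invokes Bredon's theorem --- the equivariant Whitehead theorem for free $G$-CW complexes --- to conclude that any equivariant homotopy equivalence between free $\Z_k$-complexes automatically has an equivariant homotopy inverse. This uses Theorem~\ref{thm:rainbow} as a black box. You, by contrast, re-open the proof of Theorem~\ref{thm:rainbow} and upgrade the construction itself to be equivariant: the closed stars $C_{d,m+1,i}$ form a single $\Z_k$-orbit and meet only inside the invariant subcomplex $(\Z_k)^{*m}_{\alt\le d}$, so one local strong deformation retraction $r_0$ (and its homotopy), conjugated by the group elements and glued along the pointwise-fixed overlaps, yields an explicit $\Z_k$-equivariant strong deformation retraction. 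This is sound, and the details you sketch (well-definedness on overlaps, equivariance of the glued map and homotopy) all check out. Your approach is more elementary and self-contained --- it avoids citing Bredon --- and it gives strictly more, namely explicit equivariant strong deformation retractions rather than merely equivariant homotopy equivalences; the trade-off is that it requires redoing part of the argument of Theorem~\ref{thm:rainbow} and handling the passage to the infinite union $(\Z_k)^{*\N}_{\alt\le d}$, which you correctly delegate to the standard telescope/colimit argument for an ascending chain of cofibrations that are deformation retracts. (For the final sentence of the corollary --- the mere existence of a $\Z_k$-map to $(\Z_k)^{*(d+1)}$ --- even that last step is unnecessary: the glued retractions already assemble into a continuous equivariant retraction on the colimit, without needing the homotopy.)
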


\begin{proof}
The inclusion map from $(\Z_k)^{*(d+1)}$ into any of the
other  spaces is
$\Z_k$-equivariant and a homotopy equivalence by
Theorem~\ref{thm:rainbow}.  Since all of the spaces are free
$\Z_k$-spaces, a theorem of Bredon \cite[Ch.~II]{bredon-cohom}
\cite[Sec.~II.2]{dieck} implies that
these maps are $\Z_k$-homotopy equivalences (i.e. they have equivariant homotopy
inverses, and the homotopies can also be chosen as equivariant maps).
\end{proof}

\begin{small}

\end{small}

\end{document}